\documentclass[12pt]{amsart}

\usepackage{amsmath, amssymb, amsfonts}
\usepackage{enumerate}

\usepackage{fullpage} 
\usepackage{setspace}
\onehalfspacing 
\usepackage[
    bibstyle=alphabetic,
    citestyle=alphabetic,
]{biblatex}
\addbibresource{character-values-of-wreath-products-v2.bib}
\usepackage[pdftex,
            pdfauthor={B. Dong, H. Graff, J. Mundinger, S. Rothstein, and L. Vescovo},
            pdftitle={Almost All Wreath Product Character Values are Divisible by Given Primes},
            pdfkeywords={representation theory, wreath product}
			]{hyperref}

\hypersetup{
  colorlinks   = true,
  urlcolor     = blue, 
  linkcolor    = blue,
  citecolor   = red
}
\usepackage{amsthm}

\newtheorem{theorem}{Theorem}[section]
\newtheorem{lemma}[theorem]{Lemma}
\newtheorem{claim}[theorem]{Claim}

\newtheorem{proposition}[theorem]{Proposition}
\newtheorem{corollary}[theorem]{Corollary}

\newtheorem*{maintheorem}{Theorem}

\theoremstyle{definition}
\newtheorem{definition}[theorem]{Definition}

\newtheorem*{acknowledgements}{Acknowledgements}

\usepackage{ytableau}

\usepackage{xcolor}

\definecolor{color1}{RGB}{255, 210, 210}
\definecolor{color2}{RGB}{255, 60, 60}
\definecolor{color3}{RGB}{180, 200, 255}
\definecolor{color4}{RGB}{110, 150, 255}

\definecolor{skyeblue}{RGB}{145, 216, 245} 
\definecolor{darkpurp}{RGB}{149, 48, 190}

\newcommand{\tmu}{{\tilde{\mu}}}
\DeclareMathOperator{\Res}{Res}
\title{Almost All Wreath Product Character Values are Divisible by Given Primes}
\author{Brandon Dong}
\address[Brandon Dong]{Carnegie Mellon University, Pittsburgh, PA}
\author{Hannah Graff}
\address[Hannah Graff]{Creighton University, Omaha, NE}
\author{Joshua Mundinger}
\address[Joshua Mundinger]{University of Chicago, Chicago, IL}
\email[Joshua Mundinger]{mundinger@uchicago.edu}
\author{Skye Rothstein}
\address[Skye Rothstein]{Bard College, Annandale-on-Hudson, NY}
\author{Lola Vescovo}
\address[Lola Vescovo]{Macalester College, Saint Paul, MN}

\date{October 27, 2022}

\subjclass[2020]{Primary: 20C15; Secondary: 05E10}

\begin{document}
\begin{abstract}
    For a finite group $G$ with integer-valued character table and a prime $p$, we show that almost every entry in the character table of $G \wr S_N$ is divisible by $p$ as $N \to \infty$. This result generalizes the work of Peluse and Soundararajan on the character table of $S_N$.
\end{abstract}
\maketitle
\section{Introduction}
Let $S_N$ be the symmetric group on $N$ letters. 
The complex irreducible characters of $S_N$ were calculated by Frobenius in 1900; in particular, Frobenius showed that the characters are integer-valued \cite{frobenius}.
In 2019, Alex Miller investigated the distribution of the parity of entries of the character table of $S_N$.
He made the remarkable conjecture that for any prime $p$ and exponent $\ell \geq 1$, the proportion of entries of the character table of $S_N$ divisible by $p$ (and later $p^\ell$ for $\ell \geq 1$) tends to 1 as $N \to \infty$ \cite{miller19, miller19b}. This conjecture was recently proved by Peluse and Soundararajan in the case $\ell=1$ in \cite{peluse2022almost}.

This leaves the question of investigating the distribution of residues modulo $p$ for more general finite groups with integer-valued character tables. A natural infinite family of such is the wreath product $G \wr S_N$ as $N \to \infty$. When $G$ is a fixed group with integer-valued character table, it is known that the characters of $G \wr S_N$ are also integer-valued \cite[Corollary 4.4.11]{james2006representation}.
These families include the Weyl group of type $B_N$, when $G = \mathbb Z/2\mathbb Z$, and wreath products $S_M \wr S_N$ of symmetric groups. 

Our main result is a generalization of Peluse and Soundararajan's theorem:

\begin{maintheorem}[see Theorem \ref{theorem: main theorem} below]
Let $G$ be a group with integer-valued character table and let $G \wr S_N$ be the wreath product of $G$ with $S_N$. For all primes $p$, the proportion of entries in the character table of $G \wr S_N$ which are divisible by $p$ tends to 1 as $N \to \infty$.
\end{maintheorem}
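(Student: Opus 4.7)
The plan is to generalize the Peluse-Soundararajan approach via a Murnaghan-Nakayama-type character formula for $G \wr S_N$. Both conjugacy classes and irreducible characters of $G \wr S_N$ are indexed by multipartitions of $N$ labeled by the conjugacy classes $\{C_i\}$ and irreducible characters $\{\chi_j\}$ of $G$; write $\Lambda = (\lambda^{(j)})_{j=1}^r$ and $M = (\mu^{(i)})_{i=1}^r$. Following the standard wreath-product character formula (see Macdonald, Ch.~I Appendix B), the value $\chi^\Lambda(M)$ expands as a sum over decompositions of each $\mu^{(i)}$ into a disjoint union $\bigsqcup_j \nu^{(i,j)}$ with $\sum_i |\nu^{(i,j)}| = |\lambda^{(j)}|$; each summand is a combinatorial coefficient times $\prod_{i,j} \chi_j(C_i)^{\ell(\nu^{(i,j)})}$ times $\prod_j \chi^{\lambda^{(j)}}\!\bigl(\bigsqcup_i \nu^{(i,j)}\bigr)$. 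Since $G$ has integer-valued characters by hypothesis, divisibility of $\chi^\Lambda(M)$ by $p$ is controlled entirely by the symmetric-group characters $\chi^{\lambda^{(j)}}$.

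The idea is then to arrange, for most $(\Lambda, M)$, that every summand contains a factor $\chi^{\lambda^{(j)}}(\bigsqcup_i \nu^{(i,j)})$ divisible by $p$. A typical multipartition of $N$ with $r$ components has each $\lambda^{(j)}$ of size roughly $N/r$, so each is itself a typical partition in the sense of Peluse-Soundararajan. With probability $1 - o(1)$ every $\lambda^{(j)}$ is thus ``PS-generic'' in the sense that $p \mid \chi^{\lambda^{(j)}}(\nu)$ for all but a negligible fraction of partitions $\nu$ of $|\lambda^{(j)}|$. A typical $M$ similarly has few parts divisible by $p$, so that the induced partitions $\bigsqcup_i \nu^{(i,j)}$ arising in each decomposition are also generic conjugacy-class partitions. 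A union bound over decompositions (polynomially many in the typical regime, since typical $M$ has few parts) would then show that with probability $1 - o(1)$ every summand in the character formula is divisible by $p$, so $p \mid \chi^\Lambda(M)$.

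The main obstacle is the uniform control over the sum: if individual terms are nonzero mod $p$ they could either cancel or add up to something nonzero, and we cannot afford either failure mode on a positive-density set of $(\Lambda, M)$. Forcing \emph{every} summand to vanish mod $p$ avoids this, but requires a uniform Peluse-Soundararajan bound: simultaneously across all decompositions $(\nu^{(i,j)})$ of $M$, some factor $\chi^{\lambda^{(j)}}(\bigsqcup_i \nu^{(i,j)})$ must be divisible by $p$. This likely requires a more refined $p$-core/$p$-quotient analysis on individual $\lambda^{(j)}$ in the spirit of Peluse-Soundararajan, rather than a black-box density bound, together with careful counting asymptotics for multipartitions of $N$ in the two separate steps of showing that generic $\Lambda$ have all components PS-generic and that generic $M$ present no exceptional decompositions.
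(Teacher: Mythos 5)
There is a genuine gap, and you have correctly identified where it lives but not how to close it. Your strategy is to expand $\chi^\Lambda(M)$ via a Murnaghan--Nakayama-type formula and then try to force every summand to be divisible by $p$, via a union bound over decompositions. This cannot work as stated: the individual summands do not vanish mod $p$ uniformly (small rimhooks and small components $\nu^{(i,j)}$ routinely contribute terms not divisible by $p$), the number of decompositions of a typical $M$ is far from polynomial (a typical partition of $N$ has on the order of $\sqrt{N}\log N$ parts), and Peluse--Soundararajan's theorem gives no ``PS-generic'' notion that is stable under the passage from $\mu^{(i)}$ to the derived partitions $\bigsqcup_i \nu^{(i,j)}$. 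The ``main obstacle'' you name in your final paragraph is precisely the point where the argument breaks, and no refinement of a term-by-term divisibility bound will repair it.

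The mechanism that actually makes this work — both in Peluse--Soundararajan and in the generalization to $G \wr S_N$ — is different in kind. One first proves a \emph{column congruence} (the ``mashing rule''): if $\mu \sim_p \nu$, where $\sim_p$ is generated by replacing $p$ parts of size $m$ in one component by a single part of size $mp$, then the entire columns agree mod $p$, i.e.\ $\chi^\lambda_\mu \equiv \chi^\lambda_\nu \pmod p$ for all $\lambda$. This is not proved via Murnaghan--Nakayama but via the permutation modules $M^\lambda = \mathrm{Ind}(\boxtimes V_i^{\otimes a_i})$ and their row-decomposition character formula: comparing $M^\lambda_\mu$ and $M^\lambda_\nu$ reduces to Fermat's little theorem $\chi_q(c)^p \equiv \chi_q(c)$ (this is where integrality of the $G$-characters is used) together with divisibility of a multinomial coefficient $\binom{p}{\ell_1,\dots,\ell_t}$ by $p$. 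Unitriangularity of the Kostka-type change of basis then transfers the congruence to irreducible characters. With the mashing rule in hand, one replaces $\mu$ by its fully mashed $\tmu$, whose largest part is (for almost all $\mu$, by a multipartition version of Peluse--Soundararajan's Proposition 2) larger than $\frac{\sqrt 6}{2\pi}\sqrt{N/k}\log(N/k)(1+\tfrac1{5p})$. Separately, one shows that almost every $\lambda$ has all components $\lambda_i$ equal to $t$-cores for this $t$. For such $(\lambda,\mu)$, the wreath Murnaghan--Nakayama rule gives $\chi^\lambda_{\tmu} = 0$ outright — no rimhook of that length can be removed — and hence $\chi^\lambda_\mu \equiv 0 \pmod p$. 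The crucial point is that the character is made \emph{literally zero} at a congruent column, sidestepping entirely the cancellation problem you flagged; your proposal has no analogue of the column congruence and so has no way to avoid that problem.
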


The proof relies on the combinatorics of the representations of $G \wr S_N$. If $G$ has $k$ conjugacy classes, then conjugacy classes and representations of $G \wr S_N$ are both naturally labelled by $k$-multipartitions of $N$. One of the key inputs is characterizing when two elements of $G \wr S_N$ have columns in the character table congruent modulo $p$. In Lemma \ref{lemma: permutation module congruence}, we give a combinatorial characterization directly generalizing the corresponding criterion for $S_N$.

It is known that the character tables of all Weyl groups are integer-valued. The Weyl groups of type $A$ are the symmetric groups, where our question was answered by Peluse and Soundararajan. The Weyl groups of type $B_N$ and $C_N$ are both equal to $\mathbb Z/2\mathbb Z \wr S_N$, handled by our main theorem.  
The only remaining infinite family of Weyl groups is that of type $D$. In Section \ref{section: extensions}, we also show that the proportion of character values of the Weyl group of type $D_N$ divisible by a prime $p$ tends to 1 as $N \to \infty$.

\begin{acknowledgements}
This work was supported by NSF Grant DMS-2149647, and conducted at the MathILy-EST 2022 REU under the direction of Nathan Harman. We thank Sarah Peluse for comments on an earlier version of this paper.
\end{acknowledgements}

\section{Preliminaries}

\subsection{Representation Theory of the Wreath Product}

Let $G$ be a finite group and let $S_N$ be the symmetric group on $N$ letters.

\begin{definition}
The \emph{wreath product of $G$ with $S_N$}, denoted $G \wr S_N$, is the group of $N \times N $ permutation matrices with nonzero entries in $G$.
\end{definition}

We begin by recalling the representation theory of $G \wr S_N$. The representation theory of wreath products was first studied in Specht's dissertation \cite{specht32}, anticipated by Young's work on the case $G = \mathbb Z/2\mathbb Z$ \cite{youngfifth}; see also \cite{zelevinsky,james2006representation} for more modern treatments. 
If we take the representation theory of $G$ as input data and let $N$ vary, the representation theory has structural similarities to the representation theory of $S_N$, the case when $G = 1$. While representations of the symmetric group are labelled by partitions of $N$, representations of the wreath product are labelled by multipartitions:

\begin{definition}
 A \emph{$k$-multipartition} of an integer $N$ is $\lambda = (\lambda_1, \dots, \lambda_k)$ where $\lambda_i$ is a partition for all $i$ such that $\sum_{i=1}^k \vert \lambda_i \vert = N$.
\end{definition}

Suppose that $G$ has $k$ conjugacy classes.
Then $k$-multipartitions of $N$ label the conjugacy classes of $G \wr S_N$. We will not need to use the specific form of this bijection in this paper; it is used in the proofs of character formulas in Propositions \ref{proposition: wreath-irreducibles} and \ref{proposition: perm_modules}, which we omit.

\begin{proposition}[\cite{james2006representation}, Theorem 4.2.8]
\label{proposition: cycle products index conj classes}
If $G$ has $k$ conjugacy classes, then the conjugacy classes of $G \wr S_N$ are indexed by $k$-multipartitions of $N$.
Given $x \in G \wr S_N$, the multipartition $\lambda$ corresponding to $x$ is formed as follows: for each cycle in $x$ of length $\ell$, if the product of the nonzero entries in that cycle is in the $i$th conjugacy class of $G$, then add $\ell$ to $\lambda_i$.
\end{proposition}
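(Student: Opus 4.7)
The plan is to track the effect of conjugation on cycle data and show that the resulting map to $k$-multipartitions is a well-defined bijection. Write an element of $G \wr S_N$ as a pair $(\mathbf{g}; \sigma)$ with $\mathbf{g} = (g_1, \ldots, g_N) \in G^N$ and $\sigma \in S_N$, with multiplication $(\mathbf{g}; \sigma)(\mathbf{h}; \tau) = (g_1 h_{\sigma^{-1}(1)}, \ldots, g_N h_{\sigma^{-1}(N)};\, \sigma\tau)$. For each cycle $C = (i_1\, i_2\, \cdots\, i_\ell)$ of $\sigma$, define the cycle product $p_C := g_{i_\ell} g_{i_{\ell-1}} \cdots g_{i_1} \in G$. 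First I would verify that starting the cycle at a different point replaces $p_C$ by a cyclic shift of the same word, which is conjugate in $G$, so the conjugacy class $[p_C]$ is intrinsic to $C$.

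Next I would compute $(\mathbf{h}; \tau)(\mathbf{g}; \sigma)(\mathbf{h}; \tau)^{-1}$ directly and verify that its permutation part is $\tau\sigma\tau^{-1}$, so cycles are transported by $\tau$, and that the cycle product along the image cycle $\tau(C)$ equals $h_j\, p_C\, h_j^{-1}$ for an appropriate index $j$. This shows that the assignment $(\mathbf{g}; \sigma) \mapsto \lambda$ in the statement factors through conjugacy classes, yielding a well-defined map to $k$-multipartitions of $N$. Surjectivity is immediate: given $\lambda = (\lambda_1, \ldots, \lambda_k)$ and conjugacy-class representatives $c_1, \ldots, c_k$ of $G$, take $\sigma$ with cycle type $\lambda_1 \cup \cdots \cup \lambda_k$ and place $c_i$ in one slot and $1$ in the remaining slots of each cycle attached to a part of $\lambda_i$.

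The main obstacle is injectivity. Given two elements with the same invariant, after conjugating by a permutation in $S_N \subset G \wr S_N$ I may assume both have the same permutation part $\sigma$ with matching cycles. On each cycle of length $\ell$ the problem reduces to the following claim: for tuples $(a_1,\ldots,a_\ell), (a'_1,\ldots,a'_\ell) \in G^\ell$ with conjugate products $a_\ell \cdots a_1 \sim a'_\ell \cdots a'_1$, there exist $x_1,\ldots,x_\ell \in G$ with $a'_j = x_{j+1} a_j x_j^{-1}$, indices mod $\ell$. This is a telescoping exercise: choose $x_1$ with $x_1 (a_\ell \cdots a_1) x_1^{-1} = a'_\ell \cdots a'_1$, define $x_{j+1} := a'_j x_j a_j^{-1}$ recursively, and check $x_{\ell+1} = x_1$. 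Assembling the $x_j$'s across all cycles produces an element $\mathbf{h} \in G^N$ which conjugates one element of $G \wr S_N$ to the other, completing the bijection.
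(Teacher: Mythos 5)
Your proposal is correct, and it actually proves more than the paper does. In the paper, Proposition~\ref{proposition: cycle products index conj classes} is cited to James--Kerber and is not proved in full; the paper only sketches why the assignment of a multipartition to a conjugacy class is well defined, by checking invariance of cycle-product classes under conjugation by the two generating subgroups $S_N$ and $G^N \subseteq G \wr S_N$ (reducing the $G^N$-check to a single $N$-cycle conjugated by $(g,1,\ldots,1)$). Your well-definedness step is essentially the same computation, just organized as one direct calculation of $(\mathbf{h};\tau)(\mathbf{g};\sigma)(\mathbf{h};\tau)^{-1}$ rather than split into the two generating cases.

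What you add is a proof that the map is a bijection. Your surjectivity argument is correct and is the obvious one. Your injectivity argument via the telescoping gauge transformation $x_{j+1} := a'_j x_j a_j^{-1}$ is clean and does close up: unwinding the recursion gives $x_{\ell+1} = (a'_\ell\cdots a'_1)\, x_1\, (a_\ell\cdots a_1)^{-1}$, which equals $x_1$ exactly by the choice of $x_1$ conjugating one total product to the other. One small point worth making explicit: when you reduce to a common permutation part ``with matching cycles,'' you need the conjugating $\tau\in S_N$ to align cycles so that corresponding cycles agree in \emph{both} length and cycle-product class. This is possible precisely because the two elements produce the same multipartition $\lambda$, so for each class $c_i$ and each length $\ell$ the two elements have the same number of length-$\ell$ cycles with product in $c_i$. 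With that clarification spelled out, the argument is a complete, self-contained proof, whereas the paper defers to the reference for everything beyond well-definedness.
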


One can check the assignment of a conjugacy class to a multipartition is well-defined by checking under conjugation by $S_N$ and by diagonal matrices $G^N \subseteq G \wr S_N$.
Conjugating an element of $G \wr S_N$ by $S_N$ does not change the set of cycle products at all. 
If $(g_1,\ldots,g_N) \in G^N$ and $(12\cdots N)$ is an $N$-cycle,
the conjugate of $(12\cdots N)(g_1,g_2,\ldots,g_N)$ by $(g,1,\ldots,1)$ is $(12\cdots N) (g_1g^{-1},g_2,\ldots,gg_N)$; these two elements have conjugate cycle products $g_N\cdots g_2g_1$ and $g(g_N\cdots g_2g_1)g^{-1}$. The general case of conjugation by $G^N$ reduces to the above case.


To find the complex irreducible representations of $G \wr S_N$, we need the complex irreducible representations of $G$ as input; call the irreducible $G$-representations $V_1, \dots, V_k$.

\begin{proposition}[\cite{james2006representation}, Theorem 4.4.3]
\label{proposition: wreath-irreducibles}
If $G$ has $k$ conjugacy classes, then the irreducible representations of $G \wr S_N$ are in bijection with $k$-multipartitions of $N$. 
For $\lambda=(\lambda_1,\ldots,\lambda_k)$ a $k$-multipartition of $N$, let $a_i = |\lambda_i| $ and $G_a = G \wr S_a$. Then the irreducible representation of $G \wr S_N$ corresponding to $\lambda$ is
\begin{equation*}
    V^{\lambda} = \mathrm{Ind}_{G_{a_1}\times \dots \times G_{a_k}}^{G_N}\left(
        \boxtimes_{i=1}^k \left(
        S^{\lambda_i} \otimes  V_i^{\otimes a_i}\right)
    \right)
\end{equation*}
where $S^{\lambda_i}$ is the Specht module for $S_N$ corresponding to $\lambda_i$.
\end{proposition}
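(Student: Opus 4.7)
The plan is to apply Clifford theory to the base group $G^N \triangleleft G\wr S_N$, whose quotient is $S_N$, in three steps.

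\emph{Orbits and stabilizers.} Every irreducible representation of $G^N$ has the form $V_{f(1)}\boxtimes\cdots\boxtimes V_{f(N)}$ for a function $f\colon\{1,\ldots,N\}\to\{1,\ldots,k\}$, and $S_N$ acts on these by precomposition on $f$. The $S_N$-orbits are therefore indexed by weak compositions $(a_1,\ldots,a_k)$ of $N$, and the standard representative $W_a := \boxtimes_{i=1}^k V_i^{\otimes a_i}$ has stabilizer $S_{a_1}\times\cdots\times S_{a_k}$ in $S_N$, so its stabilizer in $G\wr S_N$ is $G_{a_1}\times\cdots\times G_{a_k}$.

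\emph{Canonical extension.} Because the $a_i$ tensor factors of $V_i^{\otimes a_i}$ are literally identical, $S_{a_i}$ acts on $V_i^{\otimes a_i}$ by permutation of factors; combined with the $G^{a_i}$ action, this yields an honest (not merely projective) representation of $G_{a_i} = G\wr S_{a_i}$ on $V_i^{\otimes a_i}$. Tensoring over $i$ produces an extension $\widetilde{W}_a$ of $W_a$ to the full stabilizer $\prod_i G_{a_i}$. The existence of this canonical extension is the key input that allows Clifford theory to proceed without cohomological obstruction: the irreducibles of $\prod_i G_{a_i}$ restricting to a multiple of $W_a$ on $G^N$ are in bijection with irreducibles of the quotient $\prod_i S_{a_i}$ via $U\mapsto \widetilde{W}_a\otimes \mathrm{Inf}(U)$. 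The irreducibles $U$ are tensor products $\boxtimes_i S^{\lambda_i}$ of Specht modules for partitions $\lambda_i\vdash a_i$, and the resulting $\prod_i G_{a_i}$-representation is isomorphic to $\boxtimes_i(S^{\lambda_i}\otimes V_i^{\otimes a_i})$.

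\emph{Induction and enumeration.} Clifford theory then asserts that each induced representation $V^\lambda$ of the proposition is irreducible and that these exhaust the irreducibles of $G\wr S_N$ as $\lambda=(\lambda_1,\ldots,\lambda_k)$ ranges over $k$-multipartitions of $N$. The main obstacle is verifying irreducibility and pairwise non-isomorphism of the $V^\lambda$: irreducibility follows from the stabilizer-plus-canonical-extension setup via Mackey's irreducibility criterion; non-isomorphism across distinct compositions $(a_1,\ldots,a_k)$ is immediate from the disjoint $G^N$-isotypic supports of the restrictions; and within a fixed composition, non-isomorphism reduces to the distinctness of the Specht modules $S^{\lambda_i}$ as $S_{a_i}$-representations. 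As a sanity check, the count of $k$-multipartitions of $N$ agrees with the number of conjugacy classes given by Proposition \ref{proposition: cycle products index conj classes}, confirming completeness of the enumeration.
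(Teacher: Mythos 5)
Your proof is correct, and the argument is sound at every step. Note, however, that the paper does not prove this proposition: it attributes it to \cite{james2006representation} (Theorem 4.4.3) and, as remarked just before Proposition \ref{proposition: cycle products index conj classes}, explicitly omits the proof. So there is no in-paper proof to compare against. Your Clifford-theoretic argument --- restrict to the base group $G^N$, identify the orbits of $S_N$ on $\operatorname{Irr}(G^N)$ with weak compositions $(a_1,\ldots,a_k)$ of $N$, observe that the orbit representative $\boxtimes_i V_i^{\otimes a_i}$ extends canonically to its inertia group $\prod_i G\wr S_{a_i}$ because the permutation action on identical tensor factors supplies the extension, then inflate the irreducibles $\boxtimes_i S^{\lambda_i}$ of the quotient $\prod_i S_{a_i}$, tensor, and induce --- is precisely the standard proof found in James--Kerber and in other modern treatments such as Zelevinsky's. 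The one cosmetic point: the sanity check comparing the count of $k$-multipartitions to the number of conjugacy classes is not needed for completeness, since Clifford theory already guarantees that inducing from inertia groups over a set of $G^N$-orbit representatives exhausts $\operatorname{Irr}(G\wr S_N)$; but it is a reasonable cross-check and you label it as such.
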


Character values of wreath products can be calculated using a modified version of the Murnaghan-Nakayama rule for the symmetric group. 
Let $\chi^\lambda$ be the character of $V^\lambda$ and $\chi^\lambda_\mu$ be the value of $\chi^\lambda$ on the conjugacy class corresponding to $\mu$.
Then $\chi^\lambda_\mu$ is calculated by decomposing the of Young diagrams of $\lambda_i$ for all $i$ using rimhooks:

\begin{definition}
A \emph{rimhook} of a $k$-multipartition $\lambda=(\lambda_1,\ldots,\lambda_k)$ is $k$ adjacent boxes in the Young diagram of some $\lambda_i$ such that no other boxes are remaining south or east after the rimhook has been removed and no box in the rimhook has a southeast neighbor.
\end{definition}

\begin{figure}[h]
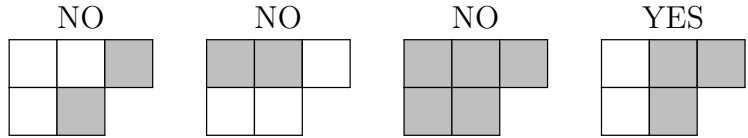

\begin{center}
    
\begin{tabular}{ccccccccc}
    
    NO & & NO & & NO & & YES \\
    \begin{ytableau}
*(white) & *(white) & *(lightgray) \\
	        *(white) & *(lightgray) \\
\end{ytableau} & & \begin{ytableau}
*(lightgray) & *(lightgray) & *(white) \\
	        *(white) & *(white) \\
\end{ytableau} & & \begin{ytableau}
*(lightgray) & *(lightgray) & *(lightgray) \\
	        *(lightgray) & *(lightgray) \\
\end{ytableau} & & \begin{ytableau}
*(white) & *(lightgray) & *(lightgray) \\
	        *(white) & *(lightgray) \\
\end{ytableau}
\end{tabular}

\end{center}

    \caption{
        Examples of three invalid and one valid rimhooks in $\lambda = ((3^12^1))$.
    }
    \label{fig:rimhook example}
\end{figure}

\begin{definition}
For $k$-multipartitions $\lambda$ and $\mu$, a \emph{rimhook decomposition} of $\lambda$ by $\mu$ is obtained by repeatedly removing rimhooks in $\lambda$ with parts of $\mu$ in a fixed ordering such that after all rimhooks have been taken, there are no boxes of $\lambda$ left. All the possible ways to take rimhooks of $\lambda$ with parts of $\mu$ is the set $RHD(\lambda,\mu)$.
\end{definition}

The Murnaghan-Nakayama rule can be modified for wreath products as follows:

\begin{proposition}[\cite{james2006representation}, Theorem 4.4.10]
\label{proposition: MN rule}
Let $\lambda$ and $\mu$ be $k$-multipartitions of $N$. Let $\chi^{1}, \chi^{2}, \ldots,\chi^k$ be the irreducible characters of $G$.
For $\rho \in RHD(\lambda,\mu)$, let $\psi(\rho)$ be defined by
\begin{equation*}
    \psi(\rho) = \prod_{i = 1}^{k}\left({\prod_{\text{rimhooks $h$ in } \lambda_i} \chi^{i} (c_h)}\right)
\end{equation*}
where $c_h$ is the conjugacy class of $G$ associated to $h$. Then
\begin{equation*}
    \chi^\lambda_\mu = \sum_{\rho \in \text{RHD}(\lambda, \mu)} (-1)^{\text{ht}(\rho)} \psi(\rho),
\end{equation*}
where $ht(\rho)$ is the height of the rimhook decomposition.
\end{proposition}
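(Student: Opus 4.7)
My plan is to prove the wreath product Murnaghan--Nakayama rule by induction on the number of parts of $\mu$, reducing the general case to a one-cycle recursion applied repeatedly. Specifically, factor $x \in G \wr S_N$ of type $\mu$ as $x = y \cdot x'$, where $y$ is a single generalized cycle of length $\ell$ whose cycle product lies in the $j$th conjugacy class $C_j$ of $G$ and $x'$ acts on the complementary indices. I would first show the recursion
\[
\chi^\lambda(x) = \sum_{i=1}^k \sum_{\substack{h \text{ rimhook of size } \ell \\ \text{in } \lambda_i}} (-1)^{\mathrm{ht}(h)}\, \chi^i(C_j)\, \chi^{\lambda \setminus h}(x'),
\]
where $\lambda\setminus h$ denotes the multipartition obtained by removing $h$ from $\lambda_i$. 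Iterating this recursion over the parts of $\mu$ in the fixed ordering then yields a sum over rimhook decompositions, with each rimhook $h$ placed in some $\lambda_i$ contributing a factor $\chi^i(c_h)$, where $c_h$ is the conjugacy class of $G$ attached to the part of $\mu$ producing $h$. This is precisely the formula in the statement.

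To establish the one-cycle recursion, I would apply Frobenius's induced character formula to $V^\lambda = \mathrm{Ind}_H^{G_N}\bigl(\boxtimes_{i=1}^k (S^{\lambda_i} \otimes V_i^{\otimes a_i})\bigr)$, with $H = G_{a_1} \times \cdots \times G_{a_k}$. The contributing conjugates $g x g^{-1}$ must lie in $H$, which forces the underlying permutation of $y$ to land entirely within a single block $B_i = \{a_1+\cdots+a_{i-1}+1,\ldots,a_1+\cdots+a_i\}$; summing over the choice of $i$ produces the outer sum over the $k$ components of $\lambda$. For each such $i$, the cycle product formula gives that $y$ acts on $V_i^{\otimes a_i}$ with trace $\chi^i(C_j)\cdot(\dim V_i)^{a_i-\ell}$, while its image in $S_{a_i}$ is a single $\ell$-cycle. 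Applying the classical Murnaghan--Nakayama rule to $\chi^{S^{\lambda_i}}$ on that $\ell$-cycle decomposes the $S_{a_i}$-contribution into a sum over rimhooks $h$ of size $\ell$ in $\lambda_i$, and reassembling the tensor factors -- with the $(\dim V_i)^{a_i-\ell}$ absorbed by re-inducing from the smaller parabolic $G_{a_1}\times\cdots\times G_{a_i-\ell}\times\cdots\times G_{a_k}$ -- yields $\chi^{\lambda\setminus h}(x')$.

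The main obstacle is the double-coset bookkeeping in the induced character formula: ensuring that only the block-preserving cosets contribute and that the normalization $1/|H|$ combines cleanly with the dimension factors $(\dim V_i)^{a_i-\ell}$ and the re-induction from $G_{N-\ell}$. A slicker alternative, which I would pursue if the direct approach becomes unwieldy, is to work through Zelevinsky's characteristic map $\mathrm{ch}: \bigoplus_N R(G \wr S_N) \to \bigotimes_{i=1}^k \Lambda$, under which $\chi^\lambda \mapsto \prod_i s_{\lambda_i}$ and the indicator of the conjugacy class of type $\mu$ becomes a product of power sums weighted by the character table of $G$. The wreath MN rule then reduces to the classical rimhook expansion of each Schur function $s_{\lambda_i}$ in the power-sum basis.
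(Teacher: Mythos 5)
The paper does not supply a proof of this proposition; it cites \cite{james2006representation}, Theorem 4.4.10. So there is no paper proof to compare against, and your proposal should be judged on its own merits.

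Your overall strategy --- prove a one-cycle recursion
\[
\chi^\lambda(x) \;=\; \sum_{i=1}^k \sum_{\substack{h \text{ rimhook of size } \ell \\ \text{in } \lambda_i}} (-1)^{\mathrm{ht}(h)}\, \chi^i(C_j)\, \chi^{\lambda \setminus h}(x')
\]
and iterate over the parts of $\mu$ --- is the right skeleton, and the observation that $gxg^{-1}\in H$ forces the $\ell$-cycle of $gyg^{-1}$ to land in a single block is correct. But the trace computation in the middle has a real gap. You assert that $y$ acts on $V_i^{\otimes a_i}$ with trace $\chi^i(C_j)\cdot(\dim V_i)^{a_i-\ell}$, but the element whose trace you must take is $gxg^{-1}=(gyg^{-1})(gx'g^{-1})$, and $gx'g^{-1}$ also acts nontrivially on the remaining $a_i-\ell$ tensor factors of $V_i^{\otimes a_i}$ as well as on the other blocks. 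The factor $(\dim V_i)^{a_i-\ell}$ therefore does not appear; what appears is a trace contribution from $x'$ that must later be reassembled, across all blocks, into $\chi^{\lambda\setminus h}(x')$ for $G\wr S_{N-\ell}$. Similarly, you cannot simply ``apply the classical Murnaghan--Nakayama rule to $\chi^{S^{\lambda_i}}$ on that $\ell$-cycle'' in isolation: the image of $gxg^{-1}$ in $S_{a_i}$ is an $\ell$-cycle \emph{times} the restriction of $x'$, so you need the recursive form $\chi^{\lambda_i}(\tau)=\sum_h(-1)^{\mathrm{ht}(h)}\chi^{\lambda_i\setminus h}(\tau')$ where $\tau'$ is $\tau$ with the designated $\ell$-cycle deleted, and this must be threaded through the coset bookkeeping and the normalization $1/|H|$. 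As written, the sketch does not close this loop, and the re-induction step is where the actual content of the proof lives.

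Your second suggestion is the cleaner route and I would develop it instead. Under Zelevinsky's characteristic map $\mathrm{ch}$ one has $\chi^\lambda \mapsto \prod_i s_{\lambda_i}$, and a conjugacy class of type $\mu$ in $G\wr S_N$ corresponds to a product of power sums in the ``class variables,'' which one re-expands in the ``character variables'' using the character table of $G$; this is exactly where the factors $\chi^i(c_h)$ come from. After this change of basis the rule reduces componentwise to the expansion of $s_{\lambda_i}$ against power sums, i.e.\ the classical Murnaghan--Nakayama rule, which gives both the rimhook sum and the sign $(-1)^{\mathrm{ht}(\rho)}$ with no coset combinatorics. This is the argument in Macdonald, Ch.\ I, Appendix B, and in Zelevinsky's book, and it avoids the trace-bookkeeping pitfall above.
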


The permutation module characters of wreath products form another basis for the space of class functions of $G \wr S_N$ that is easier to work with.

\begin{definition}\label{definition: permutation modules}
Let $\lambda = (\lambda_1,\ldots,\lambda_k)$ be a $k$-multipartition of $N$ and let $a_i = |\lambda_i|$. 
For each $\lambda_i$, let $S_{\lambda_i}$ be the Young subgroup of $S_{a_i}$ corresponding to $\lambda_i$ and let $G_{\lambda_i} = G \wr S_{\lambda_i}$. 
Then the \emph{permutation module} $M^{\lambda}$ for $G \wr S_N$ is defined by
\begin{equation*}
    M^{\lambda} = \mathrm{Ind}_{G_{\lambda_1} \times \cdots \times G_{\lambda_k}}^{G \wr S_N} 
    \left( \boxtimes_{i=1}^k V_i^{\otimes a_i} \right).
\end{equation*}
\end{definition}

There is a character formula for $M^\lambda$ using row decompositions instead of rimhook decompositions. It is as follows:

\begin{definition}
Let $\lambda$ and $\mu$ be $k$-multipartitions of $N$. A \emph{row decomposition} of $\lambda$ by $\mu$ is 
a function $\rho: \{\text{rows of }\mu\} \to \{\text{rows of }\lambda\}$ such that if $r$ is a row of $\lambda$, then the rows in $\rho^{-1}(r)$ have the same total length as $r$.
The set of all row decompositions of $\lambda$ by $\mu$ is denoted $RD(\lambda, \mu)$. 
\end{definition}

We will think of row decompositions of $\lambda$ by $\mu$ as a tiling of the Young diagrams of $\lambda$ by rows, where rows of $\mu$ are placed in a fixed ordering.

\begin{figure}[h]
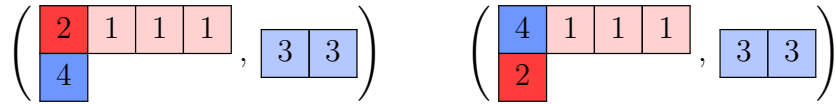

    \centering
    
    \ytableausetup{centertableaux, boxsize = 1.5em}
    \begin{tabular}{ccc}
       $\left( \hspace{1mm} \begin{ytableau}
    *(color2)2 & *(color1)1 & *(color1)1 & *(color1)1 \\
    *(color4)4
    \end{ytableau} \hspace{1mm}, \hspace{1mm} \begin{ytableau}
    *(color3)3 & *(color3)3
    \end{ytableau}
    \right)$ & \hspace{4mm} & $\left( \hspace{1mm} \begin{ytableau}
    *(color4)4 & *(color1)1 & *(color1)1 & *(color1)1 \\
    *(color2)2
    \end{ytableau} \hspace{1mm}, \hspace{1mm} \begin{ytableau}
    *(color3)3 & *(color3)3
    \end{ytableau}
    \right)$
    \end{tabular}
    
    \ytableausetup{boxsize=0.4em}
    \caption{All valid row decompositions of 
    $\left(\ydiagram{4,1},\ydiagram{2}\right)$ by  $({\color{red}{31}}, {\color{blue}{21}})$. 
    The numbers in the boxes indicate the order in which parts of $\mu$ are placed into rows of $\lambda$, with fixed right-to-left placement.
    }
\end{figure}

\begin{proposition}\label{proposition: perm_modules}
Let $\lambda$ and $\mu$ be $k$-multipartitions of $N$.
Let $\chi^1,\chi^2,\ldots,\chi_k$ be the irreducible characters of $G$.
For $\rho \in RD(\lambda,\mu)$, let $\alpha(\rho)$ be defined by
\begin{equation*}
    \alpha(\rho) = \prod_{q = 1}^{k}\left({\prod_{\text{cycles }r \text{ placed into }\lambda_q} \chi^{q} (c_r)}\right),
\end{equation*}
where $c_r$ is the conjugacy class of $G$ associated to $r$.
Then the character for permutation module $M^\lambda$ at $\mu$ is
\begin{equation*}
    M^\lambda_\mu = \sum_{\rho\in \text{RD}(\lambda,\mu)} \alpha(\rho).
\end{equation*}
\end{proposition}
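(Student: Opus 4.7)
The plan is to apply the Frobenius induced-character formula directly to $M^\lambda = \mathrm{Ind}_H^{G \wr S_N} W$, where $H = G_{\lambda_1} \times \cdots \times G_{\lambda_k}$ and $W = \boxtimes_i V_i^{\otimes a_i}$, and then reorganize the resulting sum combinatorially. Using the factorization $G_{\lambda_i} = \prod_j (G \wr S_{(\lambda_i)_j})$, the subgroup $H$ is the stabilizer (under the permutation action on $[N]$) of a fixed ordered set partition $P_0$ of $[N]$ whose blocks $B_r$ are indexed by the rows $r$ of $\lambda$ with $|B_r|$ equal to the length of $r$; in particular $H$ imposes no restriction on $G$-entries. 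Hence left cosets $xH$ are in natural bijection with ordered set partitions of $[N]$ of this block shape via $xH \mapsto x \cdot P_0$, and the condition $x^{-1}gx \in H$ (equivalently $g \in xHx^{-1}$) amounts to: the underlying permutation $\sigma$ of $g = (\sigma; g_1, \ldots, g_N)$ preserves each block $B_r^x := (x \cdot P_0)_r$. Equivalently, each cycle of $\sigma$ lies entirely inside a single block.

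Next I would identify the contributing cosets with row decompositions in $RD(\lambda, \mu)$. By Proposition \ref{proposition: cycle products index conj classes}, the cycles of $\sigma$ in $g$ correspond bijectively to the rows of $\mu$: a cycle of length $\ell$ whose cycle product lies in the $i$th conjugacy class of $G$ contributes a row of length $\ell$ to $\mu_i$. An assignment of each such cycle to a block $B_r$ of $P_0$, such that the total cycle length in $B_r$ equals $|r|$, is exactly the data of a function $\rho \in RD(\lambda, \mu)$; and the corresponding ordered set partition is uniquely determined by taking $B_r^x$ to be the union of the point-supports of the cycles assigned to $r$. Thus cosets with $x^{-1}gx \in H$ are in bijection with $RD(\lambda,\mu)$.

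Finally I will check that $\chi_W(x^{-1}gx) = \alpha(\rho)$ for the coset corresponding to $\rho$. Under the factorization $H = \prod_r (G \wr S_{|r|})$, the module $W$ becomes $\boxtimes_r V_{i(r)}^{\otimes |r|}$, where $i(r)$ is the index of the multipartition containing $r$, and conjugation splits $g$ into its restrictions to the blocks. A direct trace computation for the tensor power representation shows that, for $(\tau; h_1, \ldots, h_m) \in G \wr S_m$, the character of $V_i^{\otimes m}$ is
\begin{equation*}
\prod_{c \text{ cycle of } \tau} \chi^i\bigl(\textstyle\prod_{j \in c} h_j\bigr),
\end{equation*}
where the inner product is taken in cyclic order (and is well-defined up to conjugation in $G$, since cyclic rotations produce conjugate products and $\chi^i$ is a class function). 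Applying this to each block $B_r^x$ and multiplying over $r$ gives $\prod_r \prod_{c : \rho(c) = r} \chi^{i(r)}(c_c)$, which equals $\alpha(\rho)$ after reindexing by the target multipartition $q$.

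The main obstacle is the last step, the tensor-power character formula on wreath-product elements: it requires carefully tracking how $S_m$ permutes the tensor factors and the cyclic order in which the $h_j$ multiply, and verifying that the resulting trace depends only on the cycle-product conjugacy classes. This is a standard computation, but it is the only place in the argument where nontrivial representation theory (rather than coset bookkeeping) intervenes.
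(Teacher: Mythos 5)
Your proof is correct and follows exactly the approach the paper gestures at: the paper gives no argument beyond the one-line remark that the proposition ``follows from the character formula for induced representations,'' and you have carried out that computation in full. The identification of contributing cosets $xH$ with ordered set partitions whose blocks contain complete cycles of $g$, the resulting bijection with $RD(\lambda,\mu)$, and the reduction of $\chi_W(x^{-1}gx)$ to the standard cycle-product trace formula for $V_i^{\otimes m}$ on $G\wr S_m$ are all accurate and together constitute the proof the authors omitted.
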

The proof follows from the character formula for induced representations.

We now describe the change-of-basis between irreducible and permutation characters.

\begin{definition}
    The \emph{dominance order} on $k$-multipartitions is defined by $\lambda \succcurlyeq \eta$ if and only if $\lambda_i$ dominates $\eta_i$ for all $i$.
\end{definition}

\begin{lemma} \label{lemma: unitriangularity}
The matrix of multiplicities $[M^\lambda: V^\eta]$ of the irreducible representations of $G \wr S_N$ in permutation modules is unimodular and upper-triangular with respect to dominance order.
\end{lemma}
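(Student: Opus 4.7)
The plan is to reduce the statement to the classical Kostka-number decomposition of symmetric-group permutation modules into Specht modules. By transitivity of induction applied to the definition of $M^\lambda$, I first rewrite
$$M^\lambda \;=\; \mathrm{Ind}_{G_{a_1}\times\cdots\times G_{a_k}}^{G\wr S_N}\!\left(\boxtimes_{i=1}^k \mathrm{Ind}_{G\wr S_{\lambda_i}}^{G\wr S_{a_i}}(V_i^{\otimes a_i})\right),$$
so it suffices to understand each inner induction $\mathrm{Ind}_{G\wr S_{\lambda_i}}^{G\wr S_{a_i}}(V_i^{\otimes a_i})$ and then reassemble.

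The key step is a tensor-product identity. Since $V_i^{\otimes a_i}$ extends from a $G^{a_i}$-module to a $G\wr S_{a_i}$-module via the natural tensor-factor permutation action, the projection formula yields
$$\mathrm{Ind}_{G\wr S_{\lambda_i}}^{G\wr S_{a_i}}(V_i^{\otimes a_i}) \;\cong\; \mathrm{Ind}_{G\wr S_{\lambda_i}}^{G\wr S_{a_i}}(\mathbf{1}) \otimes V_i^{\otimes a_i}.$$
Because $G^{a_i}\subseteq G\wr S_{\lambda_i}$, the induced trivial module on the right is precisely the pullback along $G\wr S_{a_i}\twoheadrightarrow S_{a_i}$ of the symmetric-group permutation module $M^{\lambda_i} = \mathrm{Ind}_{S_{\lambda_i}}^{S_{a_i}}(\mathbf{1})$. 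I would then invoke the classical decomposition $M^{\lambda_i} \cong \bigoplus_{\eta_i} K_{\eta_i,\lambda_i}\, S^{\eta_i}$, where $K_{\lambda_i,\lambda_i}=1$ and $K_{\eta_i,\lambda_i}=0$ unless $\eta_i \succcurlyeq \lambda_i$ in dominance order. Substituting back and comparing with the formula for $V^\eta$ in Proposition \ref{proposition: wreath-irreducibles} gives
$$M^\lambda \;\cong\; \bigoplus_\eta \Bigl(\prod_{i=1}^k K_{\eta_i,\lambda_i}\Bigr)\, V^\eta,$$
so that $[M^\lambda:V^\eta] = \prod_i K_{\eta_i,\lambda_i}$. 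This multiplicity equals $1$ when $\eta = \lambda$ and vanishes unless $\eta_i \succcurlyeq \lambda_i$ for every $i$, which is exactly the condition $\eta \succcurlyeq \lambda$; this yields the claimed unitriangularity.

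The main obstacle I anticipate is checking the tensor/projection identity rigorously at the level of $G\wr S_{a_i}$-modules --- in particular, spelling out that $V_i^{\otimes a_i}$ genuinely extends from $G^{a_i}$ to $G\wr S_{a_i}$ so that the projection formula applies, and that the outer induction to $G\wr S_N$ commutes with tensor products and direct sums as needed. Once these compatibilities are in place, the conclusion follows directly from the classical Kostka-number statement for $S_N$ applied componentwise in $i$.
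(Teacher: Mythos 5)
Your proposal is correct and follows essentially the same route as the paper: both reduce to the classical Kostka-number decomposition of $M^{\lambda_i}$ by rewriting $M^\lambda$ as $\mathrm{Ind}_{G_{a_1}\times\cdots\times G_{a_k}}^{G\wr S_N}\left(\boxtimes_i M^{\lambda_i}\otimes V_i^{\otimes a_i}\right)$ and then using multilinearity. The only difference is one of explicitness: the paper asserts the identity $\mathrm{Ind}_{G_\lambda}^{H}\left(\boxtimes_i V_i^{\otimes a_i}\right) \cong \boxtimes_i M^{\lambda_i}\otimes V_i^{\otimes a_i}$ without spelling it out, whereas you justify it via transitivity of induction and the projection formula, which is the right way to nail it down.
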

\begin{proof}

Recall the Kostka numbers $K^{\beta,\gamma}$ for $\beta,\gamma$ partitions of $N$ are defined by
\[ M^\beta = \mathrm{Ind}_{S_\beta}^{S_N} 1 = \bigoplus_\gamma \left(V^{\gamma}\right)^{\oplus K^{\beta,\gamma}},\]
where $S_\beta$ is the Young subgroup corresponding to $\beta$ and $V^\gamma$ is the Specht module corresponding to $\gamma$. Note that our notation for $M^\beta$ and $V^\gamma$ agrees with that of wreath products $G \wr S_N$ when $G=1$.
The Kostka numbers satisfy $K^{\beta,\beta} = 1$ and $K^{\beta,\gamma} > 0$ if and only if $\beta \succcurlyeq \gamma$ in dominance order \cite[I.6]{macdonald98}.

We claim that
\begin{equation} \label{equation: permutation-irrep-decomposition}
    M^\lambda = \bigoplus_{\eta} \left(V^\eta\right)^{\oplus c(\lambda,\eta)}, \qquad c(\lambda,\eta) = \left(\prod_{i = 1}^k K^{\lambda_i, \eta_i} \right).
\end{equation}
By Definition \ref{definition: permutation modules},
if $a_i = |\lambda_i|$ for all $i$ and $H = G_{a_1} \times G_{a_2} \times \cdots \times G_{a_k}$, then 
\[ M^\lambda = Ind_{G_\lambda}^{G_N} \left(\boxtimes_{i=1}^k V_i^{\otimes a_i} \right) = Ind_{H}^{G_N} \left( \boxtimes_{i=1}^k M^{\lambda_i} \otimes V_i^{\otimes a_i}\right),\]
where we make $M^{\lambda_i} \otimes V_i^{\otimes a_i}$ a representation of $G_{a_i}$ by having $S_{a_i}$ act diagonally and $G^{a_i}$ naturally on $V_i^{\otimes a_i}$.
Then \eqref{equation: permutation-irrep-decomposition} follows from multilinearity of the tensor product and linearity of induction. 

Now since the matrix of Kostka numbers is unimodular and upper-triangular with respect to dominance order, the same is true of the matrix $\{c(\lambda,\mu)\}_{\lambda,\mu}$.
\end{proof}

\subsection{Asymptotics of Partitions}

We recall a form of the Hardy-Ramanujan asymptotic for the number of partitions of $N$, denoted $p(N)$.

\begin{proposition}[\cite{HR}, (1.36)] \label{proposition: HR asymptotic}
    If $\delta > 0$, then  
    \begin{equation*}
        \left(\frac{2 \pi}{\sqrt{6}} - \delta\right)\sqrt{N} \leq \log p(N) \leq \left(\frac{2 \pi}{\sqrt{6}} + \delta\right)\sqrt{N}
    \end{equation*}
    for sufficiently large $N$.
\end{proposition}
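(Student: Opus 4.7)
The plan is to prove both bounds from the partition generating function
\[
f(t) := \prod_{n \geq 1} (1 - e^{-nt})^{-1} = \sum_{M \geq 0} p(M)\, e^{-Mt}
\]
via a saddle-point estimate on $f$. The full Hardy--Ramanujan--Rademacher formula relies on modular transformation properties of the Dedekind $\eta$-function and a careful contour integration, but only the much weaker logarithmic form is needed here, so a more elementary route will suffice.

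The preliminary estimate is $\log f(t) = \pi^2/(6t) + O(\log(1/t))$ as $t \to 0^+$. Expanding $-\log(1-x) = \sum_k x^k/k$ and swapping sums gives
\[
\log f(t) = \sum_{k \geq 1} \frac{1}{k}\cdot \frac{e^{-kt}}{1 - e^{-kt}},
\]
and Euler--Maclaurin comparison with $t^{-1}\sum_k k^{-2} = \pi^2/(6t)$ yields the claim. The upper bound then follows immediately: for every $t > 0$, $p(N)\, e^{-Nt} \leq f(t)$, so $\log p(N) \leq Nt + \log f(t)$, and optimizing at $t = \pi/\sqrt{6N}$ gives $\log p(N) \leq (2\pi/\sqrt{6})\sqrt{N} + O(\log N)$.

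For the lower bound, I fix the same $t = \pi/\sqrt{6N}$ and consider the tilted distribution $P(M) = p(M)\, e^{-Mt}/f(t)$. Two logarithmic derivatives of $f$ show that this distribution has mean $-f'(t)/f(t) = \pi^2/(6t^2)(1+o(1)) = N(1 + o(1))$ and variance $(\log f)''(t) = \pi^2/(3t^3)(1+o(1)) = O(N^{3/2})$. Chebyshev's inequality then confines a constant fraction of the mass to a window $I$ of length $O(N^{3/4})$ around $N$, so some $M_0 \in I$ satisfies $p(M_0)\, e^{-M_0 t} \geq f(t)/O(N^{3/4})$, giving $\log p(M_0) \geq (2\pi/\sqrt{6})\sqrt{N} - O(N^{1/4})$. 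Monotonicity of $p$ closes the case $M_0 \leq N$; if $M_0 > N$, one reruns the argument with $N$ replaced by $N - CN^{3/4}$ so that the resulting $M_0$ lies in $(-\infty, N]$, at the cost of a factor $(1 + O(N^{-1/4}))$ in the main term that is absorbed into any fixed $\delta > 0$.

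The main obstacle is the lower bound: it needs second-moment information on the tilted distribution in order to localize the mass of $f(t)$, whereas the upper bound is a one-line convex optimization. A slicker alternative is to invoke a Tauberian theorem (Ingham's or Karamata's) applied directly to $\log f(t) \sim \pi^2/(6t)$, which produces the logarithmic asymptotic for $\log p(N)$ in a single step and sidesteps the explicit variance computation.
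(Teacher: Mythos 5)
The paper does not prove this proposition at all: it is stated as a direct citation of Hardy and Ramanujan's (1.36) and used as a black box, so there is no internal proof to compare against. Your attempt therefore supplies a proof where the paper supplies none, and it follows the standard elementary saddle-point route to the logarithmic Hardy--Ramanujan asymptotic rather than the modular/circle-method machinery needed for the full asymptotic formula. This is the right level of technology for the statement as written, since only $\log p(N) = \bigl(1+o(1)\bigr)\frac{2\pi}{\sqrt 6}\sqrt N$ is needed.

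Your argument is essentially correct. The upper bound is the usual one-line Chernoff/Laplace bound $p(N) \le e^{Nt} f(t)$ optimized at $t = \pi/\sqrt{6N}$, and the preliminary estimate $\log f(t) = \pi^2/(6t) + O(\log(1/t))$ follows from the double series and an integral comparison as you describe. For the lower bound, the tilted-measure argument is sound: mean $\approx N$, variance $\approx \pi^2/(3t^3) = O(N^{3/2})$, Chebyshev localizes a constant fraction of the mass to a window of length $O(N^{3/4})$, and pigeonhole plus monotonicity of $p$ finishes, with the $M_0 > N$ case handled by shifting the target down by $O(N^{3/4})$, which only perturbs the main term by a harmless factor $1+O(N^{-1/4})$ absorbed into any $\delta > 0$. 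The one place that deserves a little more care is the variance estimate: you cannot differentiate the error term $O(\log(1/t))$ twice and conclude it is $o(t^{-3})$; instead you should estimate $(\log f)''(t) = \sum_k k\,e^{kt}(e^{kt}+1)/(e^{kt}-1)^3$ directly by the same integral comparison used for $\log f$ itself. With that repair the proof is complete. Your remark that a Tauberian theorem (Hardy--Ramanujan's own, or Ingham's) applied to $\log f(t) \sim \pi^2/(6t)$ shortcuts the second-moment computation is also correct and is arguably the cleanest path to exactly this statement.
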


Let $p_k(N)$ denote the number of $k$-multipartitions of $N$.

\begin{claim}\label{claim: asymptotics of multipartitions}
If $\delta > 0$, then 
\begin{equation*}
    \left(\frac{2 \pi}{\sqrt{6}} - \delta\right) \sqrt{kN} \leq \log{p_k(N)} \leq \left(\frac{2 \pi}{\sqrt{6}} + \delta\right) \sqrt{kN}
\end{equation*}
for sufficiently large $N$.
\end{claim}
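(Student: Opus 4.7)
The plan is to reduce the claim to the Hardy--Ramanujan bound (Proposition \ref{proposition: HR asymptotic}) via the elementary identity
\[ p_k(N) \;=\; \sum_{\substack{n_1+\cdots+n_k = N \\ n_i \geq 0}} p(n_1)\,p(n_2)\cdots p(n_k), \]
which is obtained by stratifying $k$-multipartitions of $N$ according to the tuple of sizes $(|\lambda_1|,\ldots,|\lambda_k|)$. Writing $c = 2\pi/\sqrt{6}$ throughout, both bounds will follow by estimating this sum.

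For the upper bound, I would fix some $\delta' \in (0,\delta)$, use Proposition \ref{proposition: HR asymptotic} together with the fact that $p(n)$ is bounded for small $n$ to obtain a uniform estimate $p(n) \leq C \exp\bigl((c+\delta')\sqrt{n}\bigr)$ valid for all $n \geq 0$, and then invoke the Cauchy--Schwarz inequality
\[ \sqrt{n_1} + \cdots + \sqrt{n_k} \;\leq\; \sqrt{k(n_1+\cdots+n_k)} \;=\; \sqrt{kN}. \]
Every summand is then at most $C^k \exp\bigl((c+\delta')\sqrt{kN}\bigr)$, and the number of summands is the polynomial quantity $\binom{N+k-1}{k-1}$, so the exponential absorbs the polynomial factor and yields $\log p_k(N) \leq (c+\delta)\sqrt{kN}$ for all sufficiently large $N$.

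For the lower bound, I would simply retain a single balanced term: writing $N = qk+r$ with $0 \leq r < k$, the identity gives $p_k(N) \geq p(q)^{k-r}\, p(q+1)^{r}$. Applying the lower bound of Proposition \ref{proposition: HR asymptotic} with some $\delta'' > 0$ (and noting that $q \to \infty$ as $N \to \infty$ since $k$ is fixed), and using $k\sqrt{q} = (1+o(1))\sqrt{kN}$, delivers $\log p_k(N) \geq (c - \delta)\sqrt{kN}$ for large $N$ after shrinking $\delta''$ appropriately.

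I do not anticipate any serious obstacle here: the argument is essentially a sandwich between two direct applications of Proposition \ref{proposition: HR asymptotic}, with Cauchy--Schwarz supplying the upper bound and the equality case (concentrating mass equally among the $k$ parts) supplying the matching lower bound. The only minor bookkeeping is absorbing the polynomial factor $\binom{N+k-1}{k-1}$ and the small-$n$ values of $p$ into a harmless adjustment of $\delta$.
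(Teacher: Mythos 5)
Your proposal is correct, and it takes a genuinely different route from the paper. The paper proceeds by induction on $k$, reducing $p_k(N) = \sum_a p(a)\,p_{k-1}(N-a)$ to a two-variable convolution, and then carefully splits the index range into three parts $D_1, D_2, D_3$ (small $a$, middle $a$, large $a$) so that the Hardy--Ramanujan estimate can be applied on the middle range while the two tails are crudely dominated. The key inequality there is $\sqrt{a} + \sqrt{(k-1)(N-a)} \leq \sqrt{kN}$, which is precisely your Cauchy--Schwarz step specialized to the two-variable case. Your argument instead works directly with the $k$-fold convolution, and replaces the tail-splitting bookkeeping by a single uniform bound $p(n) \leq C\exp\bigl((c+\delta')\sqrt{n}\bigr)$ valid for all $n \geq 0$ (possible since Hardy--Ramanujan handles large $n$ and there are only finitely many small values), then absorbs the polynomially many summands $\binom{N+k-1}{k-1}$ into the exponent. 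For the lower bound you keep one balanced term $(n_i \approx N/k)$, which mirrors what the paper implicitly does when it reads the lower bound off a single term of $D_2$ near $a = N/k$. Both proofs rest on the same concavity fact about $\sqrt{\cdot}$, but yours avoids the induction and the three-part decomposition entirely; the trade-off is that you must track the uniform constant $C^k$ and the summand count, whereas the paper only ever handles two-variable convolutions at a time. Your version is arguably cleaner; the paper's has the minor advantage of invoking Hardy--Ramanujan only on the range where it is directly stated.

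Two small points worth making explicit if you write this up: (i) for the uniform bound you need $C \geq 1$ so it also covers $n = 0$ where $p(0) = 1$; (ii) in the lower bound, the step $k\sqrt{q} = (1+o(1))\sqrt{kN}$ follows from $q = \lfloor N/k \rfloor$, giving $k\sqrt{q} \geq \sqrt{kN - k^2} = \sqrt{kN}\sqrt{1 - k/N}$, which is enough since $k$ is fixed.
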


This formula also appears in \cite{murty2013partition}. We provide an elementary inductive proof.
\begin{proof}
We proceed by induction on $k$. The base case $k$ = 1 is Proposition \ref{proposition: HR asymptotic}. 

For $\delta > 0$, let $\delta' = \frac{4}{5}\delta$. 
By inductive hypothesis, there exists a constant $B$ such that if $C \geq B$, then 
\begin{equation*}
    \exp\left( \left(\frac{2 \pi}{\sqrt{6}} - \delta'\right) \left( \sqrt{(k-1)C}\right) \right) \ll p_{k-1}(C) \\ \ll \exp\left( \left(\frac{2 \pi}{\sqrt{6}} + \delta'\right) \left(\sqrt{(k-1)C}\right) \right)
\end{equation*}
and 
\begin{equation*}
    \exp\left( \left(\frac{2 \pi}{\sqrt{6}} - \delta'\right) \left( \sqrt{C}\right) \right) \ll p(C) \\ \ll \exp\left( \left(\frac{2 \pi}{\sqrt{6}} + \delta'\right) \left(\sqrt{C}\right) \right).
\end{equation*}
By considering the size of the first partition in a $k$-multipartition, it follows that
\begin{equation*}
    p_k(N) = \sum_{a = 0}^N p(a)p_{k-1}(N-a).
\end{equation*}

We break up the sum for $p_k(N)$ into distinct parts: let
\begin{align*}
    D_1 &= \sum_{a=0}^{B-1} p(a)p_{k - 1}(N - a), \\
    D_2 &= \sum_{a=B}^{N-B} p(a)p_{k - 1}(N - a), \\
    D_3 &= \sum_{a=N-B+1}^N p(a)p_{k - 1}(N - a).
\end{align*} 

In $D_2$, for $B \leq a \leq N - B$, we have
\begin{multline*}
    \exp\left( \left(\frac{2 \pi}{\sqrt{6}} - \delta'\right) \left(\sqrt{a} + \sqrt{(k-1)(N-a)}\right) \right)
    \ll p(a)p_{k - 1}(N - a) \\ 
    \ll \exp\left( \left(\frac{2 \pi}{\sqrt{6}} + \delta'\right) \left(\sqrt{a} + \sqrt{(k-1)(N-a)}\right) \right).
\end{multline*}
Note that $\sqrt{a} + \sqrt{(k - 1)(N - a)} \leq \sqrt{kN}$,
with equality achieved at $a= \frac{N}{k}$. Summing over $a \in [B,N-B]$, we get
\begin{equation}\label{D_2ref}
    \exp\left(\left(\frac{2 \pi}{\sqrt{6}} - \delta\right)\sqrt{kN}\right) \ll D_2 \\ \ll (N - 2B) \exp\left( \left(\frac{2 \pi}{\sqrt{6}} + \delta'\right) \sqrt{kN} \right). 
\end{equation}

We now consider $D_1$ and $D_3$. Note that for $a \in [0, B)$, we have
$p(a)p_{k - 1}(N - a) \leq p(B)p_{k - 1}(N),$ and for $a \in (N - B, N]$, we have $p(a)p_{k - 1}(N - a) \leq p(N)p_{k - 1}(B)$.
Hence 
\begin{align}
     0 \leq D_1 
     \leq B p(B)p_{k - 1}(N) 
     &\ll B
     \exp\left( \left(\frac{2 \pi}{\sqrt{6}} + \delta'\right) \sqrt{kN} \right) \label{D_1ref}
\end{align}
for sufficiently large $N$. Likewise,
\begin{align}
     0 \leq D_3 
     \leq B p(N)p_{k - 1}(B) 
     &\ll B \exp\left( \left(\frac{2 \pi}{\sqrt{6}} + \delta'\right) \sqrt{kN} \right) \label{D_3ref}.
\end{align}
Combining \eqref{D_2ref}, \eqref{D_1ref}, and \eqref{D_3ref}, we have that  
\begin{align}\label{P_k(N)}
    \exp \left(
    {\left(\frac{2 \pi}{\sqrt{6}} - \delta\right)} \sqrt{kN}\right) \ll
    p_k(N) 
    &\ll N\exp \left( {\left(\frac{2 \pi}{\sqrt{6}} + \delta'\right)} \sqrt{kN}\right) \nonumber \\ 
    &\ll \exp \left({\left(\frac{2 \pi}{\sqrt{6}} + \delta\right)} \sqrt{kN}\right) \nonumber
\end{align}
for sufficiently large $N$.
\end{proof}

Claim \ref{claim: asymptotics of multipartitions} implies $k$-multipartitions concentrate around having close to equal-size parts:
\begin{corollary}\label{corollary: concentration of multipartitions}
For all $\delta>0$, the proportion of $k$-multipartitions $\lambda = (\lambda_1, \dots, \lambda_k) \vdash N$ such that 
\[
    \frac{N}{k}(1-\delta) < \vert \lambda_i \vert < \frac{N}{k}(1 + \delta)
\] 
for all $1 \leq i \leq k$ goes to 1 as $N \to \infty$. 

\end{corollary}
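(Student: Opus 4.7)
My plan is to upper-bound the number of ``bad'' multipartitions --- those $\lambda = (\lambda_1,\ldots,\lambda_k)$ for which some $\vert\lambda_i\vert$ lies outside the interval $(\tfrac{N}{k}(1-\delta), \tfrac{N}{k}(1+\delta))$ --- and compare this count to the lower bound on $p_k(N)$ supplied by Claim \ref{claim: asymptotics of multipartitions}. Grouping the bad multipartitions by their size vector $(a_1,\ldots,a_k) = (\vert\lambda_1\vert,\ldots,\vert\lambda_k\vert)$, the number with a fixed size vector is $\prod_i p(a_i)$. The Hardy--Ramanujan asymptotic supplies a uniform bound $p(a) \leq K e^{c\sqrt{a}}$ for some absolute constant $K$ and all $a \geq 0$, where $c = 2\pi/\sqrt{6}$, so everything reduces to controlling $\sum_i \sqrt{a_i}$ on the bad region.

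The analytic heart of the argument is a quantitative concavity estimate: there exists $\eta = \eta(\delta, k) > 0$ such that whenever $a_1,\ldots,a_k \geq 0$ satisfy $\sum_i a_i = N$ and $\vert a_j - N/k\vert \geq \delta N/k$ for at least one index $j$, one has
\[ \sum_{i=1}^k \sqrt{a_i} \leq (1-\eta)\sqrt{kN}. \]
To see this, one may assume without loss of generality that $a_1 = tN$ with $\vert t - 1/k\vert \geq \delta/k$; Cauchy--Schwarz applied to the remaining coordinates gives $\sum_i \sqrt{a_i} \leq \sqrt{N}\bigl(\sqrt{t} + \sqrt{(k-1)(1-t)}\bigr)$. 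The function $g(t) = \sqrt{t} + \sqrt{(k-1)(1-t)}$ on $[0,1]$ is strictly concave and attains its unique maximum $g(1/k) = \sqrt{k}$, so compactness of $\{t \in [0,1] : \vert t - 1/k\vert \geq \delta/k\}$ supplies the required gap of the form $g(t) \leq (1-\eta)\sqrt{k}$.

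Combining these ingredients, and noting that there are at most $(N+1)^{k-1}$ size vectors $(a_1,\ldots,a_k)$ with $\sum_i a_i = N$, the number of bad multipartitions is at most
\[ (N+1)^{k-1} K^k \exp\bigl(c(1-\eta)\sqrt{kN}\bigr). \]
Claim \ref{claim: asymptotics of multipartitions} provides $p_k(N) \geq \exp\bigl((c-\epsilon)\sqrt{kN}\bigr)$ for any $\epsilon > 0$ once $N$ is large. Choosing $\epsilon < c\eta$, the ratio of bad multipartitions to $p_k(N)$ is at most $(N+1)^{k-1} K^k \exp\bigl(-(c\eta - \epsilon)\sqrt{kN}\bigr)$, which tends to $0$ as $N \to \infty$ since polynomial growth in $N$ is dominated by $\exp(-\Omega(\sqrt{N}))$. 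The only nontrivial step is the concavity estimate in the middle paragraph; everything else is bookkeeping on Hardy--Ramanujan, with the mildly annoying detail of the uniform-in-$a$ upper bound $p(a) \leq K e^{c\sqrt{a}}$ handled by absorbing the small-$a$ values into the constant $K$.
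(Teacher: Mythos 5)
Your argument is correct and takes essentially the same approach as the paper: bound the number of ``bad'' multipartitions by using Hardy--Ramanujan, observing that $\sum_i \sqrt{a_i}$ is strictly less than $\sqrt{kN}$ when the sizes are unbalanced, and compare against the lower bound for $p_k(N)$ from Claim \ref{claim: asymptotics of multipartitions}. Your version fills in more of the details (the explicit Cauchy--Schwarz reduction to $g(t)=\sqrt{t}+\sqrt{(k-1)(1-t)}$ and the compactness gap) that the paper's terse proof only implicitly invokes via the computations inside the proof of Claim \ref{claim: asymptotics of multipartitions}.
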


\begin{proof}
Pick $0 < \varepsilon < \delta$ and $1\le i \le k$. The number of $k$-multipartitions of $N$ where $|\lambda_i| \notin \left(\frac{N}{k}(1-\varepsilon),\frac{N}{k}(1+\varepsilon)\right)$ is
\begin{equation}\label{equation: badones}
    \sum_{\lambda \text{ s.t. } |\lambda_i| \notin( \frac{N}{k}(1-\varepsilon),\frac{N}{k}(1 + \varepsilon))} p(|\lambda_i|) p_{k-1}(|\lambda_1|, \dots \hat{|\lambda_i|},\dots, |\lambda_k|).
\end{equation}
By Claim \ref{claim: asymptotics of multipartitions}, the rate at which \eqref{equation: badones} approaches infinity is significantly slower than the rate at which $p_k(N)$ approaches infinity. Since $\delta > \varepsilon$, we can conclude that the number of $k$-multipartitions $\lambda$ such that $\vert \lambda_i \vert \in \left(\frac{N}{k}(1 - \delta), \frac{N}{k}(1 + \delta)\right) $ for all $i$ tends to $1$ as $N \to \infty$.
\end{proof}

\section{Main Results}

\subsection{Character Table Column Congruences}

Corollary \ref{corollary: character congruence} below, which we call ``the mashing rule,'' gives a criterion for mod $p$ congruence of two columns of the character table of $G \wr S_N$ in terms of $k$-multipartitions. 

In this section, we must assume that $G$ has integer-valued character table. By \cite[§13.1]{serrelinear}, the group $G$ has integer-valued character table if and only if $\sigma\in G$ is conjugate to $\sigma^j$ whenever $j$ is prime to the order of $\sigma$.

\begin{definition}
Let $\sim_p$ be the equivalence relation on $k$-multipartitions generated by the following: $\mu \sim_p \nu$ if there is $j$ such that $\mu_i = \nu_i$ for $i \neq j$, and $\nu_j$ is formed by replacing one part of size $mp$ in $\mu_j$ with $p$ parts of size $m$ in $\nu_j$.
\end{definition}

\begin{figure}[ht]
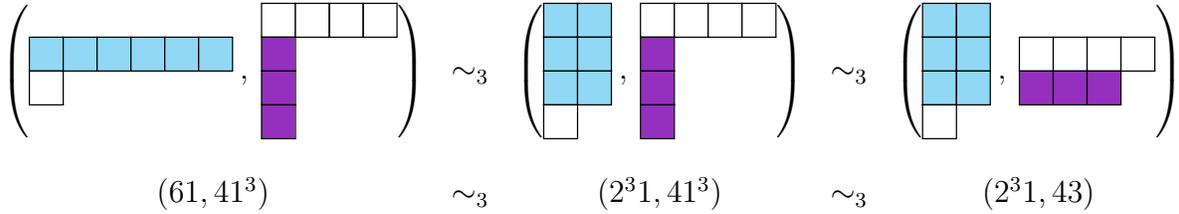

    \centering
    \ytableausetup{boxsize=1.05em} 
    \begin{tabular}{ccccc}
$\left(\begin{ytableau}
	        *(skyeblue) & *(skyeblue) & *(skyeblue) & *(skyeblue) & *(skyeblue) & *(skyeblue)\\
	        *(white)
	   \end{ytableau} \> , \> \begin{ytableau}
	        *(white) & *(white) & *(white) & *(white) \\
	        *(darkpurp) \\
	        *(darkpurp) \\
	        *(darkpurp) \\
	        \end{ytableau} \right)$
	        & $\sim_3$
	        & $\left(\begin{ytableau}
	        *(skyeblue) & *(skyeblue) \\
	        *(skyeblue) & *(skyeblue) \\
	        *(skyeblue) & *(skyeblue) \\
	        *(white)
	   \end{ytableau} \> , \> \begin{ytableau}
	        *(white) & *(white) & *(white) & *(white) \\
	        *(darkpurp) \\
	        *(darkpurp) \\
	        *(darkpurp) \\
	        \end{ytableau} \right)$
	        & $\sim_3$
	        & $\left(\begin{ytableau}
	        *(skyeblue) & *(skyeblue) \\
	        *(skyeblue) & *(skyeblue) \\
	        *(skyeblue) & *(skyeblue) \\
	        *(white)
	   \end{ytableau} \> , \> \begin{ytableau}
	        *(white) & *(white) & *(white) & *(white) \\
	        *(darkpurp) & *(darkpurp) & *(darkpurp)
	        \end{ytableau} \right)$ \\
	         &  &  &  &  \\
	        $(61, 4 1^3)$ & $\sim_{3}$ & $(2^3 1, 4 1^3)$ & $\sim_{3}$ & $(2^3 1, 4 3)$ \\
\end{tabular}
    \caption{Example of three conjugacy classes which are congruent mod 3 in $\mathbb{Z} / 2\mathbb{Z} \wr S_N$ (note that $m=2$ in the first cycle type and $m=1$ in the second).
    }
    \label{figure: mashing example}
\end{figure}

\begin{lemma}\label{lemma: permutation module congruence}
Let $p$ be a prime and $G$ be a group with integer-valued character table. Let $\mu = (\mu_1, ..., \mu_k)$ and $\nu = (\nu_1, ..., \nu_k)$ be $k$-multipartitions of $N$, indexing conjugacy classes of $G \wr S_N$. If $\mu \sim_p \nu$, then $M^\lambda_{ \mu} \equiv M^\lambda_{ \nu} \pmod{p}$ for all $k$-multipartitions $\lambda$ of $N$.
\end{lemma}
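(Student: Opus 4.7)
The plan is to combine the row-decomposition character formula (Proposition~\ref{proposition: perm_modules}) with a symmetric-group-action argument. Since $\sim_p$ is generated by single-step replacements, it suffices to treat the case in which $\nu$ is obtained from $\mu$ by replacing one part of size $mp$ in $\mu_j$ by $p$ parts of size $m$ in $\nu_j$. I would label these $p$ new rows of $\nu_j$ by $1,\dots,p$ (the character formula implicitly sums over an ordering of rows) and let $S_p$ act on $RD(\lambda,\nu)$ by permuting the labels while fixing every other row.

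First, I would verify that $\alpha$ is $S_p$-invariant: each of the $p$ labeled rows contributes a factor $\chi^{q_s}(c_j)$ to $\alpha(\rho')$, where $q_s$ is the index of the part of $\lambda$ containing the target row, and all $p$ rows inherit the common conjugacy-class label $j$ from $\nu_j$; permuting the labels merely reorders these factors. Next, I would partition $RD(\lambda,\nu)$ into $S_p$-orbits. The stabilizer of $\rho'$ contains some $p$-cycle if and only if that cycle's transitive action forces all $p$ labels to be sent to the same row of $\lambda$, and in that case every element of $S_p$ fixes $\rho'$. So every stabilizer is either all of $S_p$ or contains no $p$-cycle, meaning every orbit has size $1$ or size divisible by $p$. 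Therefore, modulo $p$,
\begin{equation*}
    M^\lambda_\nu \;\equiv\; \sum_{\rho' \in RD(\lambda,\nu)^{S_p}} \alpha(\rho') \pmod{p}.
\end{equation*}

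The remaining step is to set up a bijection between $RD(\lambda,\nu)^{S_p}$ and $RD(\lambda,\mu)$: given a fixed $\rho'$ sending every labeled row to the same target row $r$ inside $\lambda_q$, construct $\rho\in RD(\lambda,\mu)$ by sending the single length-$mp$ row of $\mu_j$ to $r$ and placing all remaining rows identically; this is clearly a bijection because the $p$ copies of length $m$ jointly fill the same $mp$ cells as one length-$mp$ row. Writing $A$ for the contribution to $\alpha$ from the non-special rows, one obtains $\alpha(\rho') = A\cdot\chi^q(c_j)^p$ and $\alpha(\rho) = A\cdot\chi^q(c_j)$. Because $G$ has integer-valued character table, $\chi^q(c_j)\in\mathbb{Z}$, and Fermat's little theorem gives $\chi^q(c_j)^p \equiv \chi^q(c_j)\pmod{p}$. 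Summing over the bijection yields $M^\lambda_\mu \equiv M^\lambda_\nu \pmod{p}$, and transitivity of $\sim_p$ finishes the proof.

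The main obstacle is checking that the $S_p$-action genuinely preserves the weight $\alpha$: this succeeds here precisely because all $p$ new rows lie inside the single partition $\nu_j$, so each contributes a character evaluated at the same class $c_j$. If the definition of $\sim_p$ had allowed the $p$ new parts of size $m$ to be distributed across different $\nu_i$, the product of characters would no longer be $S_p$-invariant and the orbit argument would collapse. The integer-valued hypothesis on $G$ enters only in the final Fermat step, which is what allows the $p$th power appearing on the $\nu$-side to be collapsed to a single character value on the $\mu$-side.
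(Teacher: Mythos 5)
Your proof is correct and takes essentially the same approach as the paper: both reduce to a single mashing step, use the row-decomposition formula, match the fixed-point contributions with $RD(\lambda,\mu)$ via Fermat's little theorem, and show the remaining contributions vanish mod $p$. The only stylistic difference is that you package the vanishing as an $S_p$-orbit-stabilizer argument (stabilizer lacking a $p$-cycle has order prime to $p$), whereas the paper groups the same row decompositions by their partial tiling and counts $\binom{p}{\ell_1,\dots,\ell_t}$ explicitly --- these orbits and groups coincide, so the arguments are equivalent.
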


\begin{proof}

It suffices to show $M^\lambda_\mu \equiv M^\lambda_\nu \pmod{p}$ if there exists $j$ such that $\mu_i = \nu_i$ for all $i \neq j$, $\mu_j = (\xi, mp)$ for some $\xi$, and $\nu_j = (\xi, m^p)$. 
We break $RD(\lambda,\nu)$ into two cases. In case one, we consider the row decompositions of $\nu$ where $m^p$ is tiled in the same row of $\lambda$. In case two we consider the row decompositions when $m^p$ is not tiled in the same row. Recalling our formula for characters of permutation modules in Proposition \ref{proposition: perm_modules}, let 
\begin{equation}
    \beta = \sum_{\substack{\rho \in RD(\lambda, \nu) \text{ s.t. $m^p$ is tiled} \\ \text{in the same row}}} \alpha(\rho)
\end{equation}
and
\begin{equation}\label{gamma}
    \gamma = \sum_{\substack{\rho \in RD(\lambda, \nu) \text{ s.t. $m^p$ is not tiled} \\ \text{in the same row}}} \alpha(\rho),
\end{equation}
so that $M^\lambda_\nu = \beta + \gamma$. Case one will show $\beta \equiv M^\lambda_\mu \pmod{p}$. Case two shows $\gamma \equiv 0 \pmod{p}$. 
Together, these two congruences imply $M^\lambda_{\mu} \equiv M^\lambda_{\nu} \pmod{p}$.

In both cases, we break into subcases based on the ways to tile $\mu_i$ for $i \neq j$ and $\xi$. In case one, we have compatible tilings for $\mu$ and $\nu$, and in case two, we have additional tilings for $\nu$.

In case one, assume we have tiled all rows of $\mu_i$ for all $i \neq j$ and we have tiled $\xi$. We now have one row remaining. There is only one way to tile the last row for both $\mu$ and $\nu$: put the remaining pieces into the remaining row. Let these row decompositions be denoted $\rho_\mu$ and $\rho_\nu$ respectively.

For $\rho_\mu$, say that we place the final row $r$ of size $mp$ in the partition $\lambda_q$. The associated cycle product is $c_j$ because $mp$ comes from $\mu_j$. Then $mp$ contributes $\chi_q(c_j)$ to the product $\alpha(\rho_\mu)$.
Then for $\rho_\nu$, the $p$ rows of size $m$ are placed into $\lambda_q$. The conjugacy class of $G$ associated with the $p$ rows of size $m$ is again $c_j$, so $m^p$ contributes a factor of $\chi_q(c_j)^p$ to $\alpha(\rho_\nu)$.

By assumption, the character values of $G$ are integral, so by Fermat's little theorem, $\chi_q(c_j) \equiv \chi_q(c_j)^p \pmod{p}$. All other factors in $\alpha(\rho_\mu)$ contributed by $\mu_i$ for $i \neq j$ and $\xi$ are identical to the corresponding factors in $\alpha(\rho_\nu)$ 
Hence, $\alpha(\rho_\mu) \equiv \alpha(\rho_\nu) \pmod{p}$.

Summing over all the tilings in case one, we find $M^\lambda_\mu \equiv \beta \pmod{p}$.

In case two, assume we have tiled all rows of $\mu_i$ for $i \neq j$ and $\xi$, after which there are $t>1$ remaining unfilled rows of the Young diagrams of $\lambda$. If $T \subseteq RD(\lambda,\nu)$ is the set of row decompositions extending our given tiling by $\mu_i$ for $i\neq j$ and $\xi$, then we will show
\[
    \sum_{\rho \in T} \alpha(\rho) \equiv 0 \pmod p.
\]
Then $\gamma$ is the sum over all such $T$ of $\sum_{\rho \in T}\alpha(\rho)$, from which it will follow $\gamma \equiv 0 \mod p$.

Call the lengths of the remaining rows $(m\ell_1 , m\ell_2, \dots, m\ell_t)$.
Since the elements of $T$ are in bijection with choices of placements of $p$ cycles of length $m$ into these rows,
\begin{equation}\label{equation: binomial}
    |T| = \binom{p}{\ell_1, \ell_2, \dots, \ell_t}.
\end{equation}

Let $\rho \in T$. Note that all pieces of $m^p$ come from $\mu_j$, and thus have cycle product $c_j$, while all other cycles in $\mu$ are in the same place in $T$. Thus $\alpha(\rho) = \alpha(\rho')$ for all $\rho,\rho' \in T$. Hence $\sum_{\rho \in T}\alpha(\rho)$ is a sum of $|T|$ identical terms. Then $\sum_{\rho \in T}\alpha(\rho) \equiv 0 \pmod{p}$ because $|T|$ is divisible by $p$.

Case one has shown that $M^\lambda_\mu \equiv \beta \pmod{p}$, and case two has shown that $\gamma \equiv 0 \pmod{p}$. Since $M^\lambda_\nu = \beta + \gamma$, we conclude $M^\lambda_{\mu} \equiv M^\lambda_{\nu} \pmod{p}$. 
\end{proof}

\begin{corollary}[The mashing rule]\label{corollary: character congruence}
Let $G$ have integer-valued character table and $k$ conjugacy classes.
Let $\mu$ and $\nu$ be $k$-multipartitions of $N$. 
If $\mu \sim_p \nu$, then $\chi_{ \mu}^{\lambda} \equiv \chi_{ \nu}^{\lambda} \pmod{p}$ for all irreducible characters $\chi^\lambda$ of $G \wr S_N$.
\end{corollary}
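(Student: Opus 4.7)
The plan is to deduce the congruence for irreducible characters from the congruence for permutation module characters established in Lemma~\ref{lemma: permutation module congruence}, using the unitriangular change-of-basis from Lemma~\ref{lemma: unitriangularity}.

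By Lemma~\ref{lemma: unitriangularity}, the matrix $C = (c(\lambda,\eta))_{\lambda,\eta}$ expressing the permutation module characters in terms of irreducible characters,
\[
    M^\lambda = \sum_\eta c(\lambda,\eta)\, V^\eta,
\]
is upper-triangular with respect to dominance order and has $1$'s on the diagonal. In particular, $C$ is unimodular over $\mathbb{Z}$, so $C^{-1}$ also has integer entries. Writing $C^{-1} = (d(\eta,\lambda))$, we obtain, at the level of characters,
\[
    \chi^\eta = \sum_\lambda d(\eta,\lambda)\, M^\lambda_{\bullet},
\]
with $d(\eta,\lambda) \in \mathbb{Z}$. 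In other words, every irreducible character is an integral linear combination of permutation module characters.

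Evaluating at the conjugacy classes indexed by $\mu$ and $\nu$, we find
\[
    \chi^\eta_\mu - \chi^\eta_\nu = \sum_\lambda d(\eta,\lambda)\bigl(M^\lambda_\mu - M^\lambda_\nu\bigr).
\]
Since $\mu \sim_p \nu$, Lemma~\ref{lemma: permutation module congruence} gives $M^\lambda_\mu \equiv M^\lambda_\nu \pmod p$ for every $k$-multipartition $\lambda$, and since each coefficient $d(\eta,\lambda)$ is an integer, the right-hand side is divisible by $p$. Therefore $\chi^\eta_\mu \equiv \chi^\eta_\nu \pmod p$ for every irreducible character $\chi^\eta$ of $G \wr S_N$.

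There is no real obstacle here; the only point that deserves care is the observation that $C^{-1}$ has integer entries, which is immediate from the unimodular upper-triangular structure of $C$ provided by Lemma~\ref{lemma: unitriangularity}. All the combinatorial and number-theoretic work has already been done in the proof of Lemma~\ref{lemma: permutation module congruence}.
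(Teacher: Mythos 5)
Your proof is correct and follows essentially the same route as the paper: invert the unimodular upper-triangular change-of-basis matrix from Lemma~\ref{lemma: unitriangularity} to express each irreducible character as an integral linear combination of permutation module characters, then apply Lemma~\ref{lemma: permutation module congruence}. The only difference is that you spell out the inverse matrix and the resulting congruence more explicitly than the paper does.
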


\begin{proof}

The set of irreducible characters and the set of characters of
permutation modules form bases for the space of class functions on $G \wr S_N$. Since the change of basis matrix between these two bases is unimodular and upper-triangular, as stated in Lemma \ref{lemma: unitriangularity}, $\chi^{\lambda}$ can be expressed as an integral linear combination of $M^{\eta}$ for all $k$-multipartitions $\lambda$. It follows from Lemma $\ref{lemma: permutation module congruence}$ that $\mu \sim_p \nu$ implies $\chi_{ \mu}^{\lambda} \equiv \chi_{ \nu}^{\lambda} \pmod{p}$.
\end{proof}

\subsection{Proof of Main Theorem}

Using Corollary \ref{corollary: character congruence}, the existence of one zero in the character table implies many more entries are divisible by $p$. We proceed, following Peluse and Soundararajan in \cite{peluse2022almost}, by using Proposition \ref{proposition: MN rule} to show sufficiently many entries of the character table are zero.

\begin{definition}
A partition is called a \emph{$t$-core} if none of the hook lengths of its Young diagram are divisible by $t$ where $t \in \mathbb{Z}$. For example, from Figure \ref{t-core} one can see that $(4,2,1)$ is a 5-core.
\end{definition}

\begin{figure}[ht]
    \centering
    \ytableausetup{centertableaux}
    \begin{ytableau}
    6 & 4 & 2 & 1 \\
    3 & 1 \\
    1
    \end{ytableau}
    
    \caption{Hook-lengths for $\lambda_i = (4,2,1)$}
    \label{t-core}
\end{figure}

Peluse and Soundararajan proved the following estimate of the number of $t$-cores when $t$ is slightly larger than the typical longest cycle in a random conjugacy class:

\begin{proposition}[\cite{peluse2022almost}, Proposition 1]\label{prop: PSProp1}
Let $L$ be a positive integer, and let $A$ be a real number with $1 \leq A \leq \log L / \log \log L$. Additionally suppose that $t$ is a positive integer with 
\begin{equation} \label{eq: t-bound}
    t \geq \frac{\sqrt{6}}{2 \pi} \sqrt{L} (\log L) \left(1 + \frac{1}{A}\right).
\end{equation}
Then the number of partitions $\lambda$ of $L$ which are not $t$-cores is at most 
\begin{equation*}
    O\left(p(L)\frac{\log L}{L^\frac{1}{2A}}\right),
\end{equation*}
independent of $t$ satisfying \eqref{eq: t-bound}.
\end{proposition}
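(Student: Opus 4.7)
The plan is to apply Littlewood's classical $t$-core / $t$-quotient bijection to turn the count of non-$t$-cores into an explicit sum, and then estimate this sum using the Hardy-Ramanujan asymptotic. Recall that Littlewood's bijection matches partitions of $L$ with pairs consisting of a $t$-core of size $L-nt$ (for some $0 \leq n \leq \lfloor L/t\rfloor$) and a $t$-tuple of partitions summing to $n$. Writing $c_t(m)$ for the number of $t$-cores of $m$, this yields
\[
p(L) = \sum_{n=0}^{\lfloor L/t \rfloor} c_t(L-nt)\, p_t(n), \qquad\text{so}\qquad p(L)-c_t(L) = \sum_{n=1}^{\lfloor L/t \rfloor} c_t(L-nt)\, p_t(n).
\]

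The next step is to estimate each term. I would use $c_t(L-nt) \leq p(L-nt)$ together with a sharper form of Proposition \ref{proposition: HR asymptotic}, namely $\log p(m) = (2\pi/\sqrt{6})\sqrt{m} + O(\log m)$. For $p_t(n)$, I would use two different bounds depending on the range of $n$: for small $n$, the combinatorial bound $p_t(n) \leq \binom{n+t-1}{n} p(n)$ (which follows from superadditivity $p(a)p(b) \leq p(a+b)$ summed over compositions of $n$), and for larger $n$, the asymptotic from Claim \ref{claim: asymptotics of multipartitions}. The hypothesis $t \geq (\sqrt{6}/2\pi)(1+1/A)\sqrt{L}\log L$ gives $(\pi/\sqrt{6})\cdot t/\sqrt{L} \geq \tfrac{1}{2}(1+1/A)\log L$, so a Taylor expansion $\sqrt{L-nt} \leq \sqrt{L} - nt/(2\sqrt{L})$ shows each term is controlled by $p(L)\cdot L^{-n(1+1/A)/2}$ times a prefactor polynomial in $t^n$. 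For the $n=1$ term in particular one obtains $p(L-t)\cdot t / p(L) = O(\log L \cdot L^{-1/(2A)})$.

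Summing over $n$, the bounds form a rapidly decaying geometric series in $n$, dominated by the $n=1$ contribution, and the total is $O(p(L)\log L / L^{1/(2A)})$. The main obstacle is controlling the intermediate range of $n$ where $nt$ is a constant fraction of $L$: there the naive Hardy-Ramanujan-based bound $\log(p(L-nt)\, p_t(n)) \leq (2\pi/\sqrt{6})(\sqrt{L-nt}+\sqrt{tn})$ can exceed $(2\pi/\sqrt{6})\sqrt{L}$ (the maximum $\sqrt{2L}$ is attained at $nt = L/2$), so one cannot simply sum term-by-term Hardy-Ramanujan bounds. Switching to the combinatorial bound $p_t(n) \leq \binom{n+t-1}{n}p(n)$ in the regime where it beats Hardy-Ramanujan (roughly $n \lesssim t/\log^2 t$) is exactly what is needed to obtain the cancellation, and the hypothesis on $t$ is calibrated so that the geometric decay in $n$ already sets in from $n=1$, producing the uniformity in $t$ asserted in the proposition.
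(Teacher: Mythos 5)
This proposition is quoted from Peluse--Soundararajan and is not proved in the paper, so there is no internal proof to compare against; I will instead evaluate the proposal on its own terms and against the cited source. Your high-level strategy --- applying Littlewood's $t$-core/$t$-quotient bijection to write
\[
p(L) - c_t(L) = \sum_{n\ge 1} c_t(L-nt)\,p_t(n),
\]
bounding $c_t(L-nt)\le p(L-nt)$, and then summing --- is indeed the route taken in \cite{peluse2022almost}, and the heuristic that the $n=1$ term should dominate and match $O(p(L)\log L/L^{1/(2A)})$ is the right one.

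However, two specific steps in your estimate are unsound. First, the inequality $p(a)p(b)\le p(a+b)$ is false: for instance $p(4)^2=25>22=p(8)$, and in fact $\log p(n)\sim \pi\sqrt{2n/3}$ is \emph{concave}, so $p(n)^2/p(2n)\to\infty$. Consequently the bound $p_t(n)\le \binom{n+t-1}{n}p(n)$ does not ``follow from superadditivity summed over compositions''; if you want a bound of this type you must argue differently (e.g.\ $p_t(n)\le t^n\,p(n)$ by coloring the at most $n$ parts of the underlying partition, or $p_t(n)\le e^{\pi\sqrt{2tn/3}}$ via the generating function $\prod_k(1-q^k)^{-t}$). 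Second, you invoke Claim~\ref{claim: asymptotics of multipartitions} to control $p_t(n)$ ``for larger $n$,'' but that claim is an asymptotic for $p_k(N)$ with $k$ \emph{fixed} and $N\to\infty$; here $t\gtrsim \sqrt{L}\log L\to\infty$, so it simply does not apply, and you need a bound uniform in $t$. Finally, the closing assertion that ``the hypothesis on $t$ is calibrated so that the geometric decay in $n$ already sets in from $n=1$'' is where the real work lives and is not established by what you wrote: once you substitute a correct bound for $p_t(n)$, the exponent you are trying to make negative is roughly $-\tfrac{n}{2A}\log L + n\log t - \log n! + O(\log L)$, and showing this sums to $O(\log L/L^{1/(2A)})$ uniformly over $t$ satisfying \eqref{eq: t-bound} and over $1\le A\le \log L/\log\log L$ (including the regime where $\log L/(2A)$ is only of size $\log\log L$) requires a careful case analysis that the sketch elides.
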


Complementing the estimate in Proposition \ref{prop: PSProp1}, Peluse and Soundararajan also estimated how many columns of the character table are congruent to a column corresponding to a partition with a large first part:

\begin{proposition}[\cite{peluse2022almost}, Proposition 2]\label{prop: PSProp2}
Let $p \leq\frac{(\log{L})}{(\log{\log{L}})^2}$ be a prime. Starting with a partition $\mu$ of $L$, we repeatedly replace every occurrence of $p$ parts of the same size $m$ by one part of size $mp$ until we arrive at a partition $\tilde{\mu}$ where no part appears more than $p-1$ times. Then the largest part of $\tilde{\mu}$ exceeds
\begin{equation*}
    \frac{\sqrt{6}}{2 \pi} \sqrt{L} \left(\log{L}\right)\left(1+\frac{1}{5p}\right),
\end{equation*}
except for at most
\begin{equation*}
    O\left(p(L)\exp \left( {-L^{\frac{1}{15p}}} \right) \right)
\end{equation*}
partitions $\mu$.
\end{proposition}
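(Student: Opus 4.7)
The plan is a Chernoff/saddle-point argument. Start by translating the combinatorial problem into the language of ``$p$-families'': each part of $\mu$ factors uniquely as $sp^k$ with $\gcd(s,p) = 1$, $k \geq 0$, and letting $c_{s,k}$ be its multiplicity in $\mu$, set $M_s := \sum_{k \geq 0} c_{s,k}\, p^k$. A direct check shows that the merging operation replaces the family $(c_{s,0}, c_{s,1}, \dots)$ by the base-$p$ digits of $M_s$; consequently, the multiplicity of $sp^k$ in $\tilde{\mu}$ is the $k$-th base-$p$ digit of $M_s$, and the largest part of $\tilde{\mu}$ equals $\max_s sp^{\lfloor \log_p M_s\rfloor}$. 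Writing $T := \frac{\sqrt 6}{2\pi}\sqrt L\,(\log L)(1 + \tfrac{1}{5p})$ and $U_s := p^{\lfloor\log_p(T/s)\rfloor+1}-1$, the partition $\mu$ is ``bad'' if and only if $M_s \leq U_s$ for every $s$ coprime to $p$.

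The second step is to set up generating functions. Let $q_p(M)$ count partitions of $M$ into powers of $p$, so that $f_p(x) := \sum_M q_p(M)\,x^M = \prod_{k \geq 0}(1 - x^{p^k})^{-1}$. Then the ordinary partition generating function factors as $P(q) = \prod_{s} f_p(q^s)$ (product over $s \geq 1$ coprime to $p$), and the generating function for bad partitions factors compatibly as
\[ F(q) = \prod_{s} F_s(q), \qquad F_s(q) = \sum_{M=0}^{U_s} q_p(M)\, q^{sM}, \]
a term-by-term truncation of $f_p(q^s)$. The Chernoff-type bound $\#\{\text{bad } \mu\} = [q^L]F(q) \leq q^{-L}F(q)$ holds for any $q \in (0,1)$. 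Choose $q = e^{-\alpha}$ with $\alpha = \pi/\sqrt{6L}$, the Hardy-Ramanujan saddle; then $e^{\alpha L}P(e^{-\alpha}) = O(p(L)\,L^{O(1)})$, so it suffices to show
\[ \prod_{s : \gcd(s,p) = 1} \frac{F_s(e^{-\alpha})}{f_p(e^{-\alpha s})} \leq \exp(-L^{1/(15p)}). \]

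The third step is a tail estimate for each factor. Write $F_s(e^{-\alpha})/f_p(e^{-\alpha s}) = 1 - \tau_s$, where $\tau_s = \Pr[M > U_s]$ under the Gibbs measure $q_p(M)e^{-\alpha sM}/f_p(e^{-\alpha s})$ on $\mathbb{Z}_{\geq 0}$. Mellin-transform asymptotics yield $\log f_p(e^{-\beta}) \sim (\log(1/\beta))^2/(2\log p)$ as $\beta \to 0$, and a corresponding calculation gives a mean $E_s \asymp \log_p(1/(\alpha s))/(\alpha s)$. The numerical constant $1 + 1/(5p)$ in $T$ is calibrated so that the multiplicative gap $U_s/E_s$ (of order $p \log p$) exceeds $1$ by a definite margin across the relevant range of $s$; a Chernoff bound applied to $M = \sum_k c_{s,k} p^k$ (a geometrically-weighted sum of independent geometric variables) then yields $\tau_s \gg (\alpha s)^{\kappa/p}$ for some $\kappa > 0$, uniformly over a range of $s$ sufficiently wide that summing $-\log(1 - \tau_s) \geq \tau_s$ produces the required $L^{1/(15p)}$.

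The main obstacle is the quantitative large-deviation estimate for $\tau_s$: because $M$ is a geometrically-weighted sum of independent geometric variables, the Chernoff optimization is delicate, and the exact exponent $1/(15p)$ emerges from a careful balance between the multiplicative gap $U_s/E_s$ and the range of $s$ being summed. Secondary technical points include tracking the polynomial corrections in the saddle-point approximations to $P(e^{-\alpha})$ and $f_p(e^{-\alpha s})$, and separating the small-$s$ range (where $F_s$ is genuinely truncated and contributes most of the tail) from $s$ near $1/\alpha$ (where $f_p(e^{-\alpha s}) \approx 1$ and the tail is trivial).
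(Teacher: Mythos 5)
This proposition is not proved in the paper at all: it is stated verbatim as Proposition~2 of Peluse--Soundararajan \cite{peluse2022almost}, and the present authors rely on it as a black box. So there is no ``paper's own proof'' to compare against; one must compare against the original source.

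Your reduction in the first two paragraphs is sound and matches the standard setup (and, as far as I recall, Peluse--Soundararajan's): grouping parts by their prime-to-$p$ factor $s$, observing that the merge operation computes base-$p$ carries so the largest part of $\tilde\mu$ is $\max_s sp^{\lfloor \log_p M_s\rfloor}$, factoring $P(q) = \prod_{s : p \nmid s} f_p(q^s)$, bounding the bad count by $q^{-L}\prod_s F_s(q)$, and taking $q = e^{-\alpha}$ at the Hardy--Ramanujan saddle. That part is fine, modulo the small bookkeeping point that $e^{\alpha L}P(e^{-\alpha})$ carries a polynomial correction in $L$, so you need the product $\prod_s(1-\tau_s)$ to beat not just $\exp(-L^{1/(15p)})$ but $\exp(-L^{1/(15p)})$ times that polynomial, which is fine since $L^{1/(15p)} \gg \log L$.

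The real gap is in the third step. You assert that a Chernoff bound for the weighted sum $M_s = \sum_k p^k X_k$ of independent geometrics ``yields $\tau_s \gg (\alpha s)^{\kappa/p}$ for some $\kappa>0$.'' Two problems. First, Chernoff/exponential-moment arguments give \emph{upper} bounds on tail probabilities; what you need here is a \emph{lower} bound on $\tau_s = \Pr[M_s > U_s]$, which is a genuinely different kind of estimate (e.g.\ a one-term lower bound, a reverse-Chernoff/Paley--Zygmund argument, or direct summation of the tail), and you never indicate how you get it. Second, the claimed shape $\tau_s \gg (\alpha s)^{\kappa/p}$ does not survive a sanity check. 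The cheapest lower bound comes from a single term, $\tau_s \ge \Pr[X_0 > U_s] = q^{s(U_s+1)} = e^{-\alpha s (U_s+1)}$, and since $U_s \in [T/s,\, pT/s)$ with $\alpha T = \tfrac12(\log L)(1+\tfrac{1}{5p})$, this gives something of order $L^{-c}$ for $c$ between $\tfrac12(1+\tfrac{1}{5p})$ and $\tfrac{p}{2}(1+\tfrac{1}{5p})$ depending on the fractional part of $\log_p(T/s)$ --- essentially independent of $s$, not a power of $\alpha s$. With $O(T) \approx \sqrt L \log L$ admissible values of $s$, this one-term bound only gives $\sum_s \tau_s \lesssim \sqrt L \log L \cdot L^{-1/2-1/(10p)} = L^{-1/(10p)}\log L$, which is \emph{not} $\gg L^{1/(15p)}$. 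So the naive version of your estimate fails, and the place where the factor $1+\tfrac{1}{5p}$ and the final exponent $1/(15p)$ actually come from --- you call it a ``careful balance'' --- is precisely the content of the proposition, and it is missing from the write-up. Without a precise lower bound on $\tau_s$ (or a different mechanism that exploits more than one term of each family), the argument does not close.

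In summary: the generating-function/saddle-point framing is the right one and agrees with the source, but the decisive large-deviation lower bound is asserted in a form that appears to be wrong, with no proof, and a back-of-envelope check shows the proposed bound is too weak to yield the stated conclusion.
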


We now extend Peluse and Soundarajan's estimate in Proposition \ref{prop: PSProp2} to $k$-multipartitions.

\begin{proposition}\label{proposition: mash to the max}
Let $p\ll N$ be a prime. Given a $k$-multipartition $\mu = (\mu_1, \dots \mu_k)$ of $N$, for all $\mu_i$ with $1 \leq i \leq k$, we repeatedly replace every occurrence of $p$ parts of the same size $m$ by one part of size $mp$ until we arrive at a $k$-multipartition $\tilde{\mu}$ where no part in any $\tilde{\mu}_i$ appears more than $p - 1$ times. 

Then the largest part of $\tilde{\mu}$ is of size at least 
\begin{equation}\label{equation: t lower bound}
    \frac{\sqrt{6}}{2 \pi} \sqrt{\frac{N}{k}} \left(\log{\frac{N}{k}}\right)\left(1+\frac{1}{5p}\right)
\end{equation}
except for a number of multipartitions $\mu$ which is at most 
\begin{equation*}
    O\left(\exp\left(-\left(\frac{N}{k}\right)^\frac{1}{15p}\right) p_k(N)\right).
\end{equation*}
\end{proposition}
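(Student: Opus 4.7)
The plan is to reduce Proposition \ref{proposition: mash to the max} to the single-partition estimate of Peluse and Soundararajan, Proposition \ref{prop: PSProp2}, by arguing one component at a time. The key observation is that the mashing operation acts entirely within a single component $\mu_i$, so $\tilde\mu = (\tilde\mu_1, \ldots, \tilde\mu_k)$ where each $\tilde\mu_i$ is the mashing of $\mu_i$; in particular, the largest part of $\tilde\mu$ equals the maximum over $i$ of the largest parts of the $\tilde\mu_i$.

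Set $T := \frac{\sqrt{6}}{2\pi}\sqrt{N/k}\log(N/k)(1+1/(5p))$, the target lower bound, and call $\mu$ \emph{bad} if the largest part of $\tilde\mu$ is strictly less than $T$. If $\mu$ is bad, then every $\tilde\mu_i$ has all parts smaller than $T$. Since $\sum_i |\mu_i| = N$, at least one index $i^*$ satisfies $L := |\mu_{i^*}| \geq N/k$. I would then apply Proposition \ref{prop: PSProp2} to $\mu_{i^*}$ viewed as a partition of $L$: the bound $\frac{\sqrt{6}}{2\pi}\sqrt{L}\log L(1+1/(5p))$ appearing there is at least $T$, since $\sqrt{x}\log x$ is monotonically increasing in $x$; hence $\mu_{i^*}$ must lie in the exceptional set of $O(p(L)\exp(-L^{1/(15p)}))$ partitions of $L$.

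To finish, I would apply a union bound over $i^* \in \{1,\ldots,k\}$, summing over the sizes $L \geq N/k$ and multiplying by $p_{k-1}(N-L)$ to account for the other $k-1$ components. The factor $\exp(-L^{1/(15p)})$ pulls out uniformly as $\exp(-(N/k)^{1/(15p)})$, and the remaining sum $\sum_{L} p(L) p_{k-1}(N-L)$ equals exactly $p_k(N)$. Since $k$ is a fixed constant, this yields the claimed bound. The only technical point is verifying the hypothesis $p \leq \log L / (\log \log L)^2$ of Proposition \ref{prop: PSProp2} at $L \geq N/k$, which follows from the standing assumption $p \ll N$ for $N$ sufficiently large; beyond that, the argument is essentially bookkeeping and presents no serious obstacle.
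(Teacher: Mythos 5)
Your proposal is correct and follows essentially the same route as the paper: locate a component $\mu_{i^*}$ of size at least $N/k$, apply Proposition \ref{prop: PSProp2} to it (using monotonicity of $\sqrt{x}\log x$ to compare the threshold at $L$ with the threshold at $N/k$), multiply by $p_{k-1}(N-L)$ for the remaining components, sum over $L$ to recover $p_k(N)$, and take a union bound over the $k$ choices of index. The one point you raise that the paper leaves tacit --- checking that $p \leq \log L/(\log\log L)^2$ holds for $L \geq N/k$ under the standing hypothesis $p \ll N$ --- is a welcome explicit remark but does not change the argument.
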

\begin{proof}
For a $k$-multipartition $\mu = (\mu_1, \mu_2, \dots \mu_k)$ of $N$, let $\tilde{\mu}$ be as above. We will bound above the number of $k$-multipartitions $\mu$ such that $\tilde{\mu}$ has largest part less than \eqref{equation: t lower bound}.

For any $\mu$, we know that for some $1 \leq i \leq k$, $\vert \mu_i \vert \geq \frac{N}{k}$.
Fix $i$ such that $\mu_i$ has size $\vert \mu_i \vert = a \geq \frac{N}{k}$. Then Proposition \ref{prop: PSProp2} tells us that the largest part of $\tmu_i$ exceeds 
\begin{equation*}
    \frac{\sqrt{6}}{2 \pi} \sqrt{a} \left(\log{a}\right)\left(1+\frac{1}{5p}\right) \geq \frac{\sqrt{6}}{2 \pi} \sqrt{\frac{N}{k}} \left(\log{\frac{N}{k}}\right)\left(1+\frac{1}{5p}\right)
\end{equation*}
except for at most 
\begin{equation*}
    O\left(p(a) \exp\left(-a^\frac{1}{15p}\right) \right)
\end{equation*}
partitions $\mu_i$ of size $a$ and therefore at most 
\begin{equation*}
    O\left(p(a) \exp\left(-a^\frac{1}{15p}\right) p_{k-1}(N - a)\right)
\end{equation*}
total $k$-multipartitions $\mu$ with $\vert \mu_i \vert = a$. Furthermore, since $a \geq \frac{N}{k}$, 
\begin{equation*}
    \exp\left(-a^\frac{1}{15p}\right) \leq \exp\left(-\left(\frac{N}{k}\right)^\frac{1}{15p}\right),
\end{equation*}
and therefore summing over all $ a\geq \frac{N}{k}$ we have that the number of multipartitions $\mu$ such that $\vert \mu_i \vert \geq \frac{N}{k}$ with no part in $\tmu_i$ exceeding \eqref{equation: t lower bound} is at most an absolute constant times

\begin{align*}
    \sum_{a = \frac{N}{k}}^N \exp\left(-a^\frac{1}{15p}\right) p(a)p_{k - 1}(N - a) &\ll \exp\left(-\left(\frac{N}{k}\right)^\frac{1}{15p}\right) \sum_{a = \frac{N}{k}}^N p(a)p_{k - 1}(N - a) \\ 
    &\ll \exp\left(-\left(\frac{N}{k}\right)^\frac{1}{15p}\right) \sum_{a = 0}^N p(a)p_{k - 1}(N - a)  \\ 
    &= \exp\left(-\left(\frac{N}{k}\right)^\frac{1}{15p}\right) p_k(N).
\end{align*}

Since this bound is identical for each $i$, the number of $k$-multipartitions $\mu$ such that $\tilde{\mu}$ does not have a part of size greater than \eqref{equation: t lower bound} is at most a factor of $k$ greater than the bound above, and therefore also at most 

\begin{equation*}
    O\left(\exp\left(-\left(\frac{N}{k}\right)^\frac{1}{15p}\right) p_k(N)\right).
\end{equation*}

\end{proof}

\begin{theorem}\label{theorem: main theorem}
    Let $G$ be a group with integer-valued character table, and let $G \wr S_N$ be the wreath product of $G$ with the symmetric group $S_N$. For all primes $p$, the proportion of entries in the character table of $G \wr S_N$ divisible by $p$ tends to 1 as $N \to \infty$.
\end{theorem}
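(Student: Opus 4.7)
The plan is to bound the number of pairs $(\lambda,\mu)$ of $k$-multipartitions of $N$ with $\chi^\lambda_\mu \not\equiv 0 \pmod{p}$ and show that this count is $o(p_k(N)^2)$, so that the proportion of nonzero entries tends to zero. Following Peluse and Soundararajan, I partition the pairs into three sets. Call $\mu$ \emph{bad} if its mashed form $\tilde{\mu}$ has no part of size at least
\[
    t_0 := \frac{\sqrt{6}}{2\pi}\sqrt{N/k}\log(N/k)\left(1 + \tfrac{1}{5p}\right);
\]
by Proposition \ref{proposition: mash to the max} the number of bad $\mu$ is $o(p_k(N))$. Call $\lambda$ \emph{bad} if some $|\lambda_i|$ lies outside $((1-\delta)N/k,(1+\delta)N/k)$ for a small $\delta > 0$ to be chosen; by Corollary \ref{corollary: concentration of multipartitions} these also number $o(p_k(N))$. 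Pairs with $\lambda$ or $\mu$ bad thus contribute $o(p_k(N)^2)$, so only good pairs need attention.

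For a good pair $(\lambda,\mu)$, the mashing rule (Corollary \ref{corollary: character congruence}) gives $\chi^\lambda_\mu \equiv \chi^\lambda_{\tilde{\mu}} \pmod{p}$. Let $T := T(\mu) \ge t_0$ be the largest part of $\tilde{\mu}$. Applying the Murnaghan-Nakayama rule (Proposition \ref{proposition: MN rule}) with the largest part ordered first, we get $\chi^\lambda_{\tilde{\mu}} = 0$ unless some $\lambda_i$ admits a rimhook of size $T$; such a rimhook produces a cell of hook length $T$, so $\lambda_i$ is not a $T$-core. Thus it remains to bound, uniformly in $T \ge t_0$, the number of good $\lambda$ for which some $\lambda_i$ is not a $T$-core.

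Fix $A = 10p$ and choose $\delta > 0$ small enough that for all $L \in ((1-\delta)N/k, (1+\delta)N/k)$ and all sufficiently large $N$,
\[
    \frac{\sqrt{6}}{2\pi}\sqrt{L}(\log L)\left(1 + \tfrac{1}{A}\right) \le t_0.
\]
Proposition \ref{prop: PSProp1} then applies to each $\lambda_i \vdash L$ with parameter $T \ge t_0$, bounding the non-$T$-cores of size $L$ by $O(p(L) \log L / L^{1/(2A)})$. Summing over $L$ in the concentration range and using the convolution $p_k(N) = \sum_L p(L) p_{k-1}(N-L)$, the number of good $\lambda$ with some $\lambda_i$ not a $T$-core is $O(k \cdot \log(N/k) \cdot (N/k)^{-1/(2A)} \cdot p_k(N)) = o(p_k(N))$, uniformly in $T \ge t_0$. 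Summing over the at most $p_k(N)$ good $\mu$ gives $o(p_k(N)^2)$, as desired.

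The main obstacle is the parameter balancing in the last step. The multiplicative gain $1 + 1/(5p)$ granted by Proposition \ref{proposition: mash to the max} must strictly dominate $\sqrt{1+\delta} \cdot \log((1+\delta)N/k)/\log(N/k) \cdot (1 + 1/A)$ for all large $N$ and $L$ in the concentration range, while $A$ must simultaneously satisfy $(N/k)^{1/(2A)} \to \infty$ and $A \le \log(N/k)/\log\log(N/k)$ as Proposition \ref{prop: PSProp1} requires. Fixing $A = 10p$ and any $\delta < 1/(20p)$ meets both constraints once $N$ is large (with $k$ and $p$ fixed), but this interplay between the mashing slack and the core-counting slack is the delicate heart of the argument.
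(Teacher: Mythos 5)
Your proposal is correct and follows essentially the same route as the paper's proof: apply Proposition~\ref{proposition: mash to the max} to guarantee a large part $t$ in $\tmu$, use Corollary~\ref{corollary: concentration of multipartitions} to restrict to $\lambda$ with $|\lambda_i|$ near $N/k$, invoke Proposition~\ref{prop: PSProp1} to show most such $\lambda_i$ are $t$-cores, then conclude $\chi^\lambda_{\tmu}=0$ via Proposition~\ref{proposition: MN rule} and transfer this to $\chi^\lambda_\mu\equiv 0\pmod p$ by the mashing rule. The parameter balancing (you take $A=10p$ and $\delta<1/(20p)$ against the $1+\tfrac{1}{5p}$ slack; the paper uses $A=5p$ and an auxiliary $A'$) is the same argument in slightly different notation, and your explicit note that the longest part must be ordered first in the Murnaghan--Nakayama expansion is a small but correct clarification of a point the paper leaves implicit.
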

\begin{proof}
    Let $k$ be the number of conjugacy classes of $G$.
    Given a $k$-multipartition $\mu$, let $\tmu$ be the multipartition obtained by repeatedly replacing $p$ parts of $\mu_i$ size $m$ with one part of size $mp$ until no $\mu_i$ has a part appearing more than $p-1$ times. 
    For $A=5p$, Proposition \ref{proposition: mash to the max} implies that the largest part of $\tmu$ has size 
    \begin{equation}
    \label{equation: final-proof-t-condition}
        t \geq \frac{\sqrt{6}}{2\pi} \sqrt{\frac{N}{k}}\left(\log\frac{N}{k}\right)\left(1 + \frac{1}{A}\right)
    \end{equation}
    for a proportion of $\mu$ tending to 1 as $N \to \infty$.
    Now pick $A' \geq 1$ and $\delta > 0$ such that 
    \[ \left(\log\frac{N}{k}\right)\left(1 + \frac{1}{A}\right) \geq \sqrt{1+\delta} \left(\log\left(\frac{N}{k}(1+\delta)\right)\right) \left(1 + \frac{1}{A'}\right).\]
    By Corollary \ref{corollary: concentration of multipartitions}, the proportion of $k$-multipartitions $\lambda = (\lambda_1,\ldots,\lambda_k)\vdash N$ such that $|\lambda_i| \in \left(\frac{N}{k}(1-\delta),\frac{N}{k}(1+\delta)\right)$ for all $i$ tends to 1 as $N \to \infty$. 
    Thus, consider only $(\lambda,\mu)$ satisfying the above conditions.
    
    Our choice of $\delta$ and $A'$ imply that 
    \[ \frac{\sqrt{6}}{2\pi} \sqrt{\frac{N}{k}}\left(\log\frac{N}{k}\right)\left(1 + \frac{1}{A}\right)
    \geq 
    \frac{\sqrt{6}}{2\pi}\sqrt{\frac{N(1+\delta)}{k}}\left(\log\left(\frac{N}{k}(1+\delta)\right)\right)\left(1 + \frac{1}{A'}\right)
    .\]
    So if $(N_1,\ldots,N_k)$ is a partition of sufficiently large $N$, then by Proposition \ref{prop: PSProp1}, the proportion of $k$-multipartitions $\lambda$ with $|\lambda_i| = N_i$ such that some $\lambda_i$ is not a $t$-core is 
    \[ \sum_{i=1}^k O\left(\frac{\log N_i}{N_i^{\frac{1}{2A'}}}\right) \]
    for all $t$ satisfying \eqref{equation: final-proof-t-condition}, independent of $t$.
    Hence, over all $k$-multipartitions $\lambda$ satisfying $|\lambda_i| \in \left(\frac{N}{k}(1-\delta),\frac{N}{k}(1+\delta)\right)$, the proportion of $\lambda$ such that some $\lambda_i$ is not a $t$-core is 
    \[ O\left( \frac{\log\left(\frac{N}{k}(1+\delta)\right)}{\left(\frac{N}{k}(1-\delta)\right)^{\frac{1}{2A'}}}\right).\]
    
    It follows that most $(\lambda,\mu)$ satisfy that $\lambda_i$ is a $t$-core for $t$ the largest part of $\tmu$. 
    Thus, for a proportion of  $(\lambda,\mu)$ tending to 1 as $N \to \infty$, we have $\chi^\lambda_{\tmu} = 0$ by Proposition \ref{proposition: MN rule} and therefore $\chi^\lambda_\mu \equiv 0\mod p$ by Corollary \ref{corollary: character congruence}.
\end{proof}

\section{Weyl groups of type D}\label{section: extensions}

\begin{definition}
The \emph{Weyl group of type $D_N$} is the group of $N\times N$ signed permutation matrices with an even number of entries equal to $-1$.
\end{definition}
We will denote this group by $D_N$ also (note that it is distinct from the dihedral group).
$D_N$ is a subgroup of $\mathbb{Z}/2\mathbb{Z} \wr S_N$ of index two. Hence, Clifford theory determines its representations:

\begin{proposition}
    The irreducible representations of the Weyl group of type $D_N$ are as follows:
    \begin{enumerate}
        \item if $(\lambda,\mu)$ is a 2-multipartition of $N$ such that $\lambda \neq \mu$, then 
        \[ \Res^{B_N}_{D_N} V^{\lambda,\mu} = \Res^{B_N}_{D_N} V^{\mu,\lambda}\]
        is an irreducible representation of $D_N$;
        \item if $(\lambda,\lambda)$ is a 2-multipartition of $N$ with equal parts, then \[ Res^{B_N}_{D_N} V^{\lambda,\lambda}\]
        is the sum of two irreducible representations of $D_N$.
        \item Each irreducible representation of $D_N$ appears exactly once in (1) or (2).
    \end{enumerate}
\end{proposition}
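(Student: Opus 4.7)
The plan is to apply standard Clifford theory to the normal inclusion $D_N \triangleleft B_N := \mathbb{Z}/2\mathbb{Z}\wr S_N$. The subgroup $D_N$ is precisely the kernel of the character $\epsilon \colon B_N \to \{\pm 1\}$ sending a signed permutation matrix to the product of its nonzero entries, so $B_N / D_N \cong \mathbb{Z}/2\mathbb{Z}$ is generated by the class of $\epsilon$. In this index-two, abelian-quotient situation, Clifford theory reduces cleanly to two cases distinguished by whether a given $B_N$-irreducible is isomorphic to its twist by $\epsilon$.

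The key preliminary step, and really the only genuinely computational one, is to show that
\[
    V^{\lambda,\mu}\otimes \epsilon \;\cong\; V^{\mu,\lambda}.
\]
To prove this I would combine Proposition \ref{proposition: wreath-irreducibles} with the identity $\mathrm{Ind}(X)\otimes \epsilon \cong \mathrm{Ind}\bigl(X\otimes \mathrm{Res}\,\epsilon\bigr)$. Restricted to the subgroup $B_{|\lambda|}\times B_{|\mu|}$, the character $\epsilon$ factors as the outer product of the sign characters of the two factors; within each factor, the one-dimensional representation $V_1^{\otimes a}$ tensored with the sign character equals $V_2^{\otimes a}$, and vice versa, because $V_1$ and $V_2$ are the trivial and sign characters of $\mathbb{Z}/2\mathbb{Z}$. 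The resulting inducing data is that of $V^{\mu,\lambda}$ up to swapping the two block factors, which is achieved by conjugation by a permutation in $S_N \subset B_N$ and hence leaves the induced representation unchanged.

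Given this identification, the three assertions are a direct specialization of Clifford's theorem. If $\lambda\neq\mu$, then $V^{\lambda,\mu}$ and $V^{\lambda,\mu}\otimes\epsilon = V^{\mu,\lambda}$ are distinct $B_N$-irreducibles, so their common restriction to $D_N$ is irreducible, establishing (1). If $\lambda=\mu$, then $V^{\lambda,\lambda}$ is isomorphic to its own twist by $\epsilon$, so its restriction decomposes as the direct sum of two non-isomorphic $D_N$-irreducibles swapped by conjugation with any element of $B_N\setminus D_N$, establishing (2). The enumerative assertion (3) also falls out of Clifford theory: every $D_N$-irreducible occurs in $\Res^{B_N}_{D_N} V$ for some $B_N$-irreducible $V$ by Frobenius reciprocity applied to $\mathrm{Ind}^{B_N}_{D_N}$, and the dichotomy above shows each $D_N$-irreducible is accounted for by (1) or (2) exactly once.

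The main obstacle is really just cleanly justifying the identification $V^{\lambda,\mu}\otimes\epsilon\cong V^{\mu,\lambda}$ from the induced-representation formula; once that is in hand, the remainder is formal. A minor bookkeeping point is to verify that the two summands in case (2) are genuinely non-isomorphic (rather than a single irreducible occurring with multiplicity two), but this is automatic since $H^2(\mathbb{Z}/2\mathbb{Z},\mathbb{C}^\times)$ is trivial, so any $D_N$-irreducible fixed by conjugation from $B_N$ admits two distinct extensions to $B_N$ differing by $\otimes\epsilon$.
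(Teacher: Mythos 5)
Your proof is correct and takes essentially the same route as the paper: both apply Clifford theory to the index-two normal inclusion $D_N \triangleleft B_N$, with the pivotal fact being the twist identity $V^{\lambda,\mu}\otimes\epsilon \cong V^{\mu,\lambda}$ (the paper's $\psi$ is your $\epsilon$). The paper simply asserts this identity and cites Curtis--Reiner for the Clifford-theoretic conclusion, whereas you supply the derivation of the identity from the induced-module description and spell out the resulting dichotomy; both are sound.
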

\begin{proof}
    Let $\psi: B_N \to \{\pm 1\}$ be the character defined by taking the product of the nonzero entries of $B_N$.
    Then $\psi \otimes V^{\lambda,\mu} = V^{\mu,\lambda}$.
    Now the Proposition follows from Clifford theory (see \cite{curtis-reiner-volume-I}).
\end{proof}

\begin{corollary}\label{corollary: D_n}
For all primes $p$, the proportion of entries in the character table of $D_N$ which are divisible by $p$ tends to 1 as $N \to \infty$.
\end{corollary}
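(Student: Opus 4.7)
The plan is to bootstrap from the main theorem applied to $B_N := \mathbb{Z}/2\mathbb{Z}\wr S_N$, using that $D_N$ is an index-two normal subgroup and the classification of $\mathrm{Irr}(D_N)$ in the preceding Proposition. Partition $\mathrm{Irr}(D_N)$ into two types. A type (1) irreducible is $\Res^{B_N}_{D_N} V^{\lambda,\mu}$ for $\lambda\neq\mu$ (with $(\lambda,\mu)$ and $(\mu,\lambda)$ yielding the same irreducible), giving $(p_2(N)-p(N/2))/2$ irreducibles; a type (2) irreducible is a summand of $\Res^{B_N}_{D_N} V^{\lambda,\lambda}$, giving at most $2p(N/2)$ irreducibles in total. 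By Claim \ref{claim: asymptotics of multipartitions}, $p(N/2) = o(p_2(N))$, so type (2) irreducibles form a vanishing fraction of $\mathrm{Irr}(D_N)$, which has size $\sim p_2(N)/2$.

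The key observation is that since $\chi^{\lambda,\mu}$ is a class function on $B_N$, for a type (1) character $\chi^{\lambda,\mu}|_{D_N}$ and any $D_N$-class $C$ contained in a $B_N$-class $C'$, the character value at $C$ equals the $B_N$-table entry $\chi^{\lambda,\mu}(C')$. After fixing an ordered representative $(\lambda,\mu)$ for each type (1) irreducible, this defines a map from the "good" entries of the $D_N$ character table (type (1) irreducible paired with any $D_N$-class) to entries of the $B_N$ character table. The map has fibers of size at most $2$, since at most two $D_N$-classes arise from a single $B_N$-class: the $B_N$-class either remains a single class in $D_N$, or splits into two (both having the same $\chi^{\lambda,\mu}$-value).

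By the main theorem applied to $B_N$, the $B_N$ character table has $o(p_2(N)^2)$ entries not divisible by $p$; hence the number of good $D_N$-entries not divisible by $p$ is at most $2\cdot o(p_2(N)^2) = o(p_2(N)^2)$. The number of "bad" $D_N$-entries (type (2) irreducible paired with any class) is at most $2p(N/2)\cdot|\mathrm{Cl}(D_N)| = O(p(N/2)\cdot p_2(N)) = o(p_2(N)^2)$. Since the $D_N$ character table has $|\mathrm{Irr}(D_N)|^2 = \Theta(p_2(N)^2)$ entries in total, the proportion of non-divisible entries tends to zero, proving the corollary.

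The main technical point is verifying the multiplicity bound in the map to the $B_N$ character table, which only requires knowing that each $B_N$-class contributes at most two $D_N$-classes (a soft consequence of the index-two hypothesis, not requiring identification of which $B_N$-classes actually split). Beyond this bookkeeping, the argument reduces entirely to the main theorem for $B_N$ together with the Hardy-Ramanujan estimate $p(N/2) = o(p_2(N))$ of Claim \ref{claim: asymptotics of multipartitions}, with no new analytic input required.
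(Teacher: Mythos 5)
Your proposal is correct and follows essentially the same route as the paper: both arguments discard the asymptotically negligible type (2) irreducibles using Claim \ref{claim: asymptotics of multipartitions}, then transfer divisibility from the $B_N$ character table to the $D_N$ character table via restriction. The main difference is presentational: where the paper invokes the index-two fact that at least half the $B_N$-classes meet $D_N$ and restricts to those columns, you instead phrase the transfer as a map from ``good'' $D_N$-entries to $B_N$-entries with fibers of size at most two, which makes the bookkeeping a bit more explicit but rests on the same underlying input (the main theorem for $B_N$, Clifford theory, and the Hardy--Ramanujan comparison $p(N/2)=o(p_2(N))$).
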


\begin{proof}
    The number of irreducible representations of $D_N$ of the form $\Res_{D_N}^{B_N} V^{\lambda,\mu}$ for $\lambda \neq \mu$ equals
    $\frac{1}{2}\left( p_2(N) - p(N/2)\right)$
    when $N$ is even, and $\frac{1}{2}p_2(N)$ when $N$ is odd.
    The number of irreducible representations appearing as a summand of $\Res^{B_N}_{D_N} V^{\lambda,\lambda}$ is $2p(N/2)$ when $N$ is even and $0$ when $N$ is odd.
    By Claim \ref{claim: asymptotics of multipartitions}, we have $p_2(N) \gg p(N/2)$ for large enough $N$, so the proportion of irreducibles of the form $\Res_{D_N}^{B_N} V^{\lambda,\mu}$ goes to 1 as $N \to \infty$.

    Since $D_N \subseteq B_N$ is of index two, at least half of the conjugacy classes in $B_N$ intersect $D_N$. Since most entries in the character table of $B_N$ are divisible by $p$,
    the same is true when we restrict to the columns which intersect $D_N$, since they are at least half of the columns. Hence, the proportion of entries in the character table of $D_N$ which are divisible by $p$ goes to 1 as $N \to\infty$.
\end{proof}

\printbibliography

\end{document}